\documentclass{jgcc} 

\keywords{two-step nilpotent group, partially commutative group, graph group, exponential group, centralizer, centroid of group.}

\usepackage{hyperref}
\usepackage{orcidlink}

\usepackage{amssymb}
\theoremstyle{plain} 

\begin{document}
		
		\title[Centroids of Graph 2-Nilpotent Groups]{Centroids of Partially \\
			Commutative Two-Step Nilpotent Groups}

		\author[I.~M.~Buchinskiy]{Ivan M. Buchinskiy\, 
			\orcidlink{0000-0001-8637-9127} 
		}	
		\address{Omsk Department of Sobolev Institute of Mathematics, Siberian Branch of the Russian Academy of Sciences, Omsk, 644099, Russian Federation}	
		\email{buchvan@mail.ru}  
		\thanks{\textit{2020 Mathematics Subject Classification.} Primary 20F18; Secondary 20F36, 05C25.} 
		\thanks{\textit{UDC.} 512.544.33.} 

		%
		%
		%
		%

		
		
		
		
		\begin{abstract}
			\noindent In this paper, we describe the centroid (according to S. Lioutikov and A. Myasnikov) of an arbitrary finitely generated graph two-step nilpotent Hall exponential group over a binomial domain whose commutativity graph has no central vertices. Based on this description, we prove that if the non-commutativity graph of such a group has exactly one connected component and the group is non-Abelian, then the group is rigid.
		\end{abstract}


	\maketitle

	\section*{Introduction}\label{S:one}
	
	The concept of an \textit{exponential group} goes back to the idea of extending exponents in groups, which was originally developed by Ph. Hall, A.I. Mal'tsev, and R. Lyndon. Ph. Hall \cite{PhHall} began to consider exponents in a \textit{binomial domain} for nilpotent groups. A.I. Mal'tsev used the completion of the ring $\mathbb{Z}$ to $\mathbb{Q}$ to obtain the solvability of the equations $x^n = g$ for all $n \in \mathbb{Z}$. R. Lyndon \cite{Lyndon} defined the \textit{$R$-exponential groups} as groups satisfying axioms (1)-(3) from the \textit{Definition \ref{definition:ExpGroup}} given below. A.G. Myasnikov and V.N. Remeslennikov \cite{MyasRemesl} refined the concept of an exponential group by introducing an additional axiom (axiom (4) from the \textit{Definition \ref{definition:ExpGroup}}). Subsequently, such groups were called MR-groups. Throughout this paper, by the terms ``$R$-group'' and ``$R$-exponential group'' we will mean the MR-group over a ring $R$.
	
	In \cite{LioutMyas}, S. Lioutikov and A. Myasnikov introduced the concept of the \textit{centroid of a group $G$} as the set of all normal quasi-endomorphisms acting on $G$. In the same paper, it was shown that the centroid of $G$ is an associative ring with identity, and that it is a maximal ring of scalars whose action on $G$ satisfies all axioms of exponential groups \cite{MyasRemesl}. Note that K.N. Ponomarev in \cite{Ponomar} introduced his own definition of the centroid of a group, which is not equivalent in general to the definition in \cite{LioutMyas}. Both definitions are intended to generalize the concept of centroid from ring theory. As was shown in \cite{Ponomar}, these centroids coincide on some classes of groups (for example, on the class of torsion-free nilpotent groups). In the current work, we will use only the definition from \cite{LioutMyas}.
	
	In \cite{LioutMyas}, the centroids of finitely generated free nilpotent $R$-exponential groups and groups of unitriangular matrices $UT_n(R)$, where $R$ is a binomial domain, were described. Using the description obtained, S. Lioutikov and A. Myasnikov proved the rigidity of such groups and, thus, positively solved the problem of F. Grunewald and D. Segal from \cite{GrunSeg}. In addition, in \cite{LioutMyas}, the centroids of CSA-groups and free products of groups were described. Note that the centroid of an arbitrary Abelian group $A$ coincides with the endomorphism ring of $A$.
	
	The main goal of this paper is to study the rigidity of finitely generated partially commutative two-step nilpotent Hall $R$-exponential groups (meaning the Hall $R$-completion; see, for example, \cite{PhHall, LioutMyas}), where $R$ is a binomial domain, using the construction of the centroid of the group. The presence of central vertices in a commutativity graph complicates the description of the centroid structure of the corresponding partially commutative group. For this reason, in this article, we will restrict ourselves to the case where the graph has no central vertices.
	
	Partially commutative groups, or graph groups, or right-angled Artin groups, have many remarkable properties (convenient normal forms of elements, solvability of basic algorithmic problems, rich structure of subgroups, and much more; \cite{13, 15, MischTimosh, MischTreier, RemeslTreier}), the list of which continues to grow.
	
	The main result of the paper is \textit{Theorem \ref{theorem:rigidity}} from Section \ref{section:rigidity}, which shows the rigidity (see \textit{Definition \ref{def:rigidity}}) of a certain class of graph two-step nilpotent groups. This result was obtained using \textit{Theorem \ref{theorem:main}} from Section \ref{section:main}, which describes the centroid of an arbitrary finitely generated graph two-step nilpotent group $G_\Delta^R$, where $\Delta$ is a graph without central vertices, $R$ is a binomial domain. \textit{Theorem \ref{theorem:main}} establishes a dependence on the structure of the commutativity graph $\Delta$ and, in fact, generalizes the description of the centroid of a finitely generated free two-step nilpotent group from \cite[Theorem 5.2]{LioutMyas}. In addition, the structure of the centralizer of a block element in $G_\Delta^R$ was described in \textit{Lemma \ref{lemma:C_block_elem}} of Section \ref{section:C_block}. This result generalizes \cite[Lemma 2]{RemeslTreier}, previously proved for a field $R$ of characteristic zero.
	
	One of the founders of universal algebraic geometry \cite{DaniyarovaMyasnikovRemeslennikov_Monography}, V.N. Remeslennikov, suggested that the main way to study universal algebraic geometry over graph groups lies through the study of algebraic geometry over graphs. In this direction, it is proposed to describe the main algebraic-geometric properties of graph groups using the properties of the corresponding commutativity graphs (see, for example, \cite{Buch}). The results of this paper highlight some similar connections for the case of graph two-step nilpotent groups.
	
	We conjecture that many results of the present paper can be generalized to the case of an infinite number of generators: then in \textit{Theorem \ref{theorem:main}} the numbers $k$ and $l$ will be ordinals. However, this issue is beyond the scope of the current paper, and we plan to return to it in the future.
	
	\section{Preliminaries}
	
	Throughout what follows, let $[g, h] = g^{-1} h^{-1} g h$ denote the commutator of elements $g, h$ of the group $G$. 
	
	Let us fix an arbitrary associative ring with identity $A$ and a group $G$. Let $A$ have an action on $G$, that is, a mapping $G \times A \rightarrow G$. The result of the action of an element $a \in A$ on $g \in G$ will be denoted by $g^a$. Let $e$ denote the unit of the group $G$.
	
	\begin{defi}[\cite{MyasRemesl}]\label{definition:ExpGroup}
		A group $G$ is called an \textit{$A$-exponential group} (or an \textit{$A$-group}) if the action of $A$ on $G$ satisfies the following axioms:
		\begin{enumerate}[(1)]
			\item $\forall a \in A$ $\forall g \in G$ $g^{1} = g$, $g^{0} = e$, $e^a = e$;
			\item $\forall a,b \in A$ $\forall g \in G$ $g^{a+b} = g^a g^b$, $g^{ab} = \left(g^a\right)^b$;
			\item $\forall a \in A$ $\forall g,h \in G$ $\left(h^{-1} g h\right)^a = h^{-1} g^a h$;
			\item $\forall a \in A$ $\forall g,h \in G$ $[g, h] = e \rightarrow (gh)^a = g^a h^a$.
		\end{enumerate}
	\end{defi}
	\noindent
	For any $A$-group $G$, it is easy to see using axioms (1)-(2) that for any $k \in A \cap \mathbb{Z}$ and $g \in G$
	\[
	g^k = 
	\begin{cases}
		\underbrace{g \cdots g}_k, &\text{if } k > 0,\\
		e, &\text{if } k = 0, \\
		\underbrace{g^{-1} \cdots g^{-1}}_{-k} = (\underbrace{g \cdots g}_{-k})^{-1}, &\text{if } k < 0.
	\end{cases}
	\]
	
	On the way to defining the centroid of a group, let us recall some necessary concepts and results from \cite{LioutMyas}.
	
	Let $P(G)$ be the set of all mappings from $G$ to itself. We define on $P(G)$ the operations $+$ and $\cdot$ as follows:
	\begin{itemize}
		\item $\forall \varphi,\psi \in P(G)$ $\forall g \in G$ $(\varphi + \psi)(g) = \varphi(g) \cdot \psi(g)$;
		\item $\forall \varphi,\psi \in P(G)$ $\forall g \in G$ $(\varphi \cdot \psi)(g) = \psi(\varphi(g))$.
	\end{itemize}
	Among all mappings from $P(G)$, we select the identity $1(g) = g$ and the trivial $0(g) = e$ for all $g \in G$. Throughout the paper, we will denote $\varphi(g)$ by $g^\varphi$.
	
	A mapping $\varphi \in P(G)$ is called \textit{normal} if 
	\[\left(h^{-1} g h\right)^\varphi = h^{-1} g^\varphi h\]
	for all $g, h \in G$. Denote by $N(G)$ the set of all normal mappings of $G$. 
	
	\begin{lem}[\cite{LioutMyas}]\label{lemma:N_G_commut}
		Let $\varphi, \psi \in N(G)$, $x, y \in G$. Then $[x, y] = e$ implies $[x^\varphi, y^\psi] = e$.
	\end{lem}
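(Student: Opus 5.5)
The idea is to turn commutation into a conjugation identity, push the normal mappings through it one at a time, and convert back. The key observation is that, for any $x,y \in G$, the relation $[x,y]=e$ is equivalent to $y^{-1} x y = x$, and also to $x^{-1} y x = y$. Normality of a mapping is exactly the statement that it commutes with every inner automorphism. So a normal mapping applied to an element fixed by conjugation by $y$ produces another element fixed by conjugation by $y$.

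\textbf{Step 1.} Assume $[x,y]=e$, so $y^{-1}xy = x$. Apply $\varphi$ to both sides and use normality of $\varphi$ with $g=x$, $h=y$:
\[
x^\varphi = \left(y^{-1} x y\right)^\varphi = y^{-1} x^\varphi y .
\]
Hence $x^\varphi$ commutes with $y$, that is, $[x^\varphi, y] = e$.

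\textbf{Step 2.} From Step 1, $(x^\varphi)^{-1}\, y\, x^\varphi = y$. Apply $\psi$ and use normality of $\psi$ with $g=y$, $h=x^\varphi$:
\[
y^\psi = \left((x^\varphi)^{-1} y\, x^\varphi\right)^\psi = (x^\varphi)^{-1} y^\psi x^\varphi .
\]
Thus $x^\varphi$ commutes with $y^\psi$, which is $[x^\varphi, y^\psi]=e$, as required.

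There is no real obstacle. The only point needing care is the order of the two steps: $\varphi$ must act on the element being conjugated while the conjugator $y$ is still the original element. Then $\psi$ is applied with the already transformed element $x^\varphi$ serving as the conjugator. Normality is stated for all $g,h\in G$, so using $h=x^\varphi$ as the conjugator is legitimate.
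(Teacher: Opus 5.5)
Your proof is correct: each step applies normality exactly as defined, first with conjugator $y$ and then with conjugator $x^\varphi$, and the translations between $[a,b]=e$ and $b^{-1}ab=a$ are valid. The paper gives no proof of its own and simply cites Lioutikov--Myasnikov for this lemma; your two-step conjugation argument is the standard one and matches what that citation relies on.
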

	
	A mapping $\varphi \in P(G)$ is called a \textit{quasi-endomorphism} if 
	\[[g, h] = e \rightarrow (gh)^\varphi = g^\varphi h^\varphi\]
	for all $g, h \in G$. Denote by $Q(G)$ the set of all quasi-endomorphisms of $G$.
	
	\begin{thm}[\cite{LioutMyas}]\label{theorem:Centroid}
		The set of mappings $\Gamma(G) = N(G) \cap Q(G)$ with operations $+$, $\cdot$ and distinguished mappings $0$ and $1$ is an associative ring with identity.
	\end{thm}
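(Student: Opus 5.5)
We check the ring axioms for $\Gamma(G)$ one at a time: closure under $+$ and $\cdot$, associativity, the identity laws, additive inverses, distributivity, and commutativity of addition. Throughout, the key tool is Lemma \ref{lemma:N_G_commut}. It lets us turn commutation of arguments into commutation of images, and this is exactly what makes addition well-behaved.

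\textbf{Closure and distinguished elements.} The maps $0$ and $1$ are clearly normal quasi-endomorphisms. Let $\varphi,\psi\in\Gamma(G)$.

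For the product, $\varphi\cdot\psi$ is the composite "first $\varphi$, then $\psi$". Normality of the composite follows by applying normality of $\varphi$ and then of $\psi$. For the quasi-endomorphism property, take $[g,h]=e$. Lemma \ref{lemma:N_G_commut} gives $[g^\varphi,h^\varphi]=e$, so
\[
(gh)^{\varphi\psi}=(g^\varphi h^\varphi)^\psi=g^{\varphi\psi}h^{\varphi\psi}.
\]

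For the sum, normality is immediate:
\[
(h^{-1}gh)^{\varphi+\psi}=h^{-1}g^\varphi h\,h^{-1}g^\psi h=h^{-1}g^{\varphi+\psi}h.
\]
For the quasi-endomorphism property, take $[g,h]=e$. Then $(gh)^{\varphi+\psi}=g^\varphi h^\varphi g^\psi h^\psi$, and we want this to equal $g^\varphi g^\psi h^\varphi h^\psi$. This holds because $[h^\varphi,g^\psi]=e$, again by Lemma \ref{lemma:N_G_commut}.

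\textbf{Additive group.} We first show that $g^\varphi$ and $g^\psi$ commute; this is the crux. Since $[g,g]=e$, Lemma \ref{lemma:N_G_commut} gives $[g^\varphi,g^\psi]=e$. Hence $\varphi+\psi=\psi+\varphi$. Associativity of $+$ comes from associativity in $G$, and $0$ is neutral.

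For the negative, define $g^{-\varphi}=(g^\varphi)^{-1}$. It is normal because conjugation commutes with inversion. For the quasi-endomorphism property, take $[g,h]=e$. Then $g^\varphi$ and $h^\varphi$ commute, so
\[
(gh)^{-\varphi}=(g^\varphi h^\varphi)^{-1}=(g^{\varphi})^{-1}(h^{\varphi})^{-1}.
\]
Clearly $\varphi+(-\varphi)=0$.

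\textbf{Multiplicative laws and distributivity.} Associativity of $\cdot$ is associativity of composition, and $1$ is a two-sided identity. For distributivity we check both sides.
\begin{itemize}
\item Left: $g^{\varphi(\psi+\chi)}=(g^\varphi)^\psi(g^\varphi)^\chi=g^{\varphi\psi+\varphi\chi}$, directly from the definition of $+$.
\item Right: $g^{(\psi+\chi)\varphi}=(g^\psi g^\chi)^\varphi$. Here we use $[g^\psi,g^\chi]=e$, proved above, together with the quasi-endomorphism property of $\varphi$. This gives $(g^\psi)^\varphi(g^\chi)^\varphi=g^{\psi\varphi+\chi\varphi}$.
\end{itemize}

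\textbf{Main obstacle.} The only non-formal points are commutativity of addition and right distributivity. Both rest on the observation that $[g^\varphi,g^\psi]=e$. That observation comes from Lemma \ref{lemma:N_G_commut} applied to the trivially commuting pair $(g,g)$, and with it the remaining verifications are routine.
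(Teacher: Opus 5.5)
Your proof is correct. Each non-formal step is properly reduced to Lemma \ref{lemma:N_G_commut}: closure of $\Gamma(G)$ under $+$, $\cdot$ and negation, commutativity of $+$, and the distributive law $(\psi+\chi)\cdot\varphi=\psi\cdot\varphi+\chi\cdot\varphi$ (the one that genuinely needs the quasi-endomorphism property of $\varphi$). The remaining axioms are inherited from $G$ and from composition of maps. This is the standard direct verification from \cite{LioutMyas}. The paper itself quotes the theorem without proof, placing Lemma \ref{lemma:N_G_commut} just before it as the key ingredient.
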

	
	\begin{defi}
		The ring $\Gamma(G)$ from \textit{Theorem \ref{theorem:Centroid}} is called the \textit{centroid of the group $G$}.
	\end{defi}
	
	Throughout the remainder of our paper, we assume that the action of $A$ on $G$ satisfies axioms (1)-(4) from \textit{Definition \ref{definition:ExpGroup}} and is \textit{faithful} (effective), i.\,e., for every nonzero element $a \in A$ there exists $g \in G$ such that $g^a \ne e$. In this case, $A$ is called the \textit{ring of scalars of $G$}. 
	
	\begin{prop}[\cite{LioutMyas}]\label{propos:centroid_max}
		Let $G$ be a group. Then
		\begin{enumerate}[1)]
			\item the centroid $\Gamma(G)$ is a ring of scalars of $G$; in particular, $G$ is a $\Gamma(G)$-group;
			\item $\Gamma(G)$ is the largest ring of scalars of the group $G$; i.\,e., if $B$ is a ring of scalars of $G$, then there is a canonical embedding in $B \hookrightarrow \Gamma(G)$.
		\end{enumerate}
	\end{prop}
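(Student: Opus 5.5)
We need to prove Proposition \ref{propos:centroid_max}: (1) $\Gamma(G)$ acts faithfully on $G$ and satisfies axioms (1)--(4) of Definition \ref{definition:ExpGroup}; (2) any ring of scalars $B$ embeds canonically into $\Gamma(G)$.

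For part 1), the action is $g^\varphi=\varphi(g)$. Faithfulness is immediate, since a nonzero $\varphi\in\Gamma(G)$ is by definition a mapping different from the trivial mapping $0$, so $g^\varphi\ne e$ for some $g$. Next we check the axioms one at a time.
\begin{enumerate}[(1)]
\item $g^1=g$ and $g^0=e$ hold by the definitions of $1$ and $0$. For $e^\varphi=e$, note that $e$ commutes with itself, so the quasi-endomorphism property gives $e^\varphi=(ee)^\varphi=e^\varphi e^\varphi$, and hence $e^\varphi=e$.
\item $g^{\varphi+\psi}=g^\varphi g^\psi$ is the definition of $+$. Likewise $g^{\varphi\psi}=(g^\varphi)^\psi$ is the definition of $\cdot$, since $(\varphi\psi)(g)=\psi(\varphi(g))$.
\item This is exactly normality of $\varphi$.
\item This is exactly the quasi-endomorphism property.
\end{enumerate}
All of this relies on Theorem \ref{theorem:Centroid} guaranteeing that $\Gamma(G)$ is closed under these operations, which we take as given.

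For part 2), let $B$ be a ring of scalars of $G$. For $b\in B$, define $\varphi_b\in P(G)$ by $g^{\varphi_b}=g^b$. Axiom (3) says that $\varphi_b$ is normal, and axiom (4) says that it is a quasi-endomorphism, so $\varphi_b\in\Gamma(G)$. Consider the map $\iota\colon b\mapsto\varphi_b$.
\begin{itemize}
\item \emph{Additivity.} Axiom (2) gives $g^{b+c}=g^bg^c=g^{\varphi_b+\varphi_c}$.
\item \emph{Multiplicativity.} Axiom (2) gives $g^{bc}=(g^b)^c=\varphi_c(\varphi_b(g))=g^{\varphi_b\varphi_c}$. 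This matches the order convention of $\cdot$ in $P(G)$, so $\iota$ is a ring homomorphism, not an anti-homomorphism.
\item \emph{Constants.} Axiom (1) gives $\varphi_1=1$ and $\varphi_0=0$.
\item \emph{Injectivity.} If $\varphi_b=\varphi_c$, then $\varphi_{b-c}=\varphi_b-\varphi_c=0$, so $g^{b-c}=e$ for all $g$, and faithfulness forces $b=c$. Here subtraction in $\Gamma(G)$ is available because $\Gamma(G)$ is a ring by Theorem \ref{theorem:Centroid}.
\end{itemize}
This $\iota$ is the canonical embedding, and by construction it is compatible with the actions.

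There is no genuine obstacle in this argument. The only point requiring care is the multiplication convention in $P(G)$: the composition order must match $g^{ab}=(g^a)^b$, and this is exactly why the paper defines $(\varphi\cdot\psi)(g)=\psi(\varphi(g))$. The substantive work, namely closure of $N(G)\cap Q(G)$ under $+$ and $\cdot$, is already contained in Theorem \ref{theorem:Centroid}.
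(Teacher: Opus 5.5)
Your proposal is correct. You check faithfulness and axioms (1)--(4) directly from the definitions of $+$, $\cdot$, normality and quasi-endomorphism (getting $e^\varphi=e$ from $(ee)^\varphi=e^\varphi e^\varphi$), and you embed $B$ via $b\mapsto(g\mapsto g^b)$; that is the standard argument. The paper gives no proof of its own here and simply quotes the result from Lioutikov and Myasnikov, and your verification is the natural direct argument behind it.
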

	
	\begin{rem}\label{remark:comm_with_integers}
		Let $A \supseteq \mathbb{Z}$, $G$ be an $A$-group, $g \in G$, $\varphi \in \Gamma(G)$, $k \in \mathbb{Z}$. Then, in particular, using axiom (4),
		\[\left(g^k\right)^{\varphi} = \left(g^\varphi\right)^k.\]
	\end{rem}
	
	The centralizer of $g \in G$ is defined as the set $C(g) = \{h \in G \ | \ [g, h] = e\}$. The centralizer of the set $X \subseteq G$ is defined naturally: $C(X) = \bigcap_{g \in X} C(g)$.
	
	\begin{lem}[\cite{LioutMyas}]\label{lemma:C_invariant}
		Let $G$ be an $A$-group. Then for any $X \subseteq G$ and $a \in A$, the following inclusion holds:
		\[C(X)^a \subseteq C(X).\]
	\end{lem}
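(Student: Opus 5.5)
We need to show that for any $h \in C(X)$ and $a \in A$ the element $h^a$ lies in $C(X)$. Since $C(X) = \bigcap_{g \in X} C(g)$, it suffices to treat one $g \in X$: if $[g,h] = e$, we want $[g, h^a] = e$. The plan is to use axiom (3) of Definition~\ref{definition:ExpGroup}, the compatibility of the action with conjugation.

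Fix $g \in X$ and $h \in C(g)$, so that $g^{-1} h g = h$. Apply $a$ to both sides and use axiom (3) with conjugating element $g$:
\[
h^a = \left(g^{-1} h g\right)^a = g^{-1} h^a g .
\]
Hence $h^a$ commutes with $g$, that is, $h^a \in C(g)$. Since $g \in X$ was arbitrary, $h^a \in \bigcap_{g\in X} C(g) = C(X)$, which proves $C(X)^a \subseteq C(X)$.

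There is no real obstacle here. The only point to check is that we conjugate $h$ by $g$, and not $g$ by $h$, so that axiom (3) applies directly. The same computation works for any normal mapping, which is consistent with Lemma~\ref{lemma:N_G_commut} applied with $\varphi = 1$ and $\psi$ the map $h \mapsto h^a$. The degenerate case $X = \emptyset$ is trivial because then $C(X) = G$.
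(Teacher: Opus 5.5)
Your argument is correct and complete. Writing $h = g^{-1}hg$ and applying axiom (3) gives $h^a = g^{-1}h^a g$, which is the standard one-line proof; the paper does not prove this lemma itself but cites it from \cite{LioutMyas}, and it is the same normality computation that underlies Lemma~\ref{lemma:N_G_commut}.
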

	In particular, \textit{Lemma \ref{lemma:C_invariant}} is valid if we consider the centroid of $\Gamma(G)$ as the ring $A$. Moreover, it follows directly from \textit{Lemma \ref{lemma:C_invariant}} that $g^a \in C(g)$ for any $g \in G$ and $a \in A$.
	
	A group $G$ is called \textit{two-step nilpotent} if $[g,h,t] = [[g,h],t] = e$ for all $g,h,t \in G$.
	By $Z(G)$ we will denote the center of the group $G$, and by $G'$ the commutator subgroup of the group $G$. If $G$ is a two-step nilpotent group, then obviously $G' \subseteq Z(G)$.
	
	\begin{lem}[\cite{LioutMyas}]\label{lemma:comms}
		Let $g,h \in G$, $\varphi \in \Gamma(G)$. If $[g,h] \in Z(G)$, then 
		\[[g^\varphi, h] = [g, h^\varphi] = [g, h]^\varphi.\]
	\end{lem}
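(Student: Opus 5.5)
We prove the two equalities $[g^\varphi,h]=[g,h]^\varphi$ and $[g,h^\varphi]=[g,h]^\varphi$ separately. The argument for the second is symmetric (equivalently, it follows from the first via $[h,g]=[g,h]^{-1}$ and Remark \ref{remark:comm_with_integers} with $k=-1$). The idea is to write the conjugate $h^{-1}gh$ as $g$ times a commutator, and then use normality together with the quasi-endomorphism property of $\varphi$.

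First, $h^{-1}gh = g[g,h]$. Since $[g,h]\in Z(G)$, the elements $g$ and $[g,h]$ commute. Because $\varphi$ is a quasi-endomorphism,
\[
\left(h^{-1}gh\right)^\varphi = \left(g[g,h]\right)^\varphi = g^\varphi [g,h]^\varphi .
\]
Because $\varphi$ is normal, the left-hand side also equals $h^{-1}g^\varphi h = g^\varphi[g^\varphi,h]$. Comparing the two expressions and cancelling $g^\varphi$ on the left gives $[g^\varphi,h]=[g,h]^\varphi$.

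For the second equality, write $g^{-1}hg = h[h,g]$. The element $[h,g]=[g,h]^{-1}$ is central, since $Z(G)$ is a subgroup. The same computation, using normality of $\varphi$ with respect to conjugation by $g$ and the quasi-endomorphism property on the commuting pair $h,[h,g]$, gives
\[
[h^\varphi,g]=[h,g]^\varphi .
\]
By Remark \ref{remark:comm_with_integers} with $k=-1$ (this only uses axiom (4) and $\varphi\in\Gamma(G)$; directly, $[h,g]$ commutes with $[h,g]^{-1}$, so $e = e^\varphi = [h,g]^\varphi ([h,g]^{-1})^\varphi$),
\[
[h,g]^\varphi = \left([g,h]^{-1}\right)^\varphi = \left([g,h]^\varphi\right)^{-1} .
\]
Finally, $[h^\varphi,g]=[g,h^\varphi]^{-1}$, so inverting both sides yields $[g,h^\varphi]=[g,h]^\varphi$.

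The only points needing care are the two uses of centrality. One is that $g$ commutes with $[g,h]$, which is what makes the quasi-endomorphism property applicable. The other is that $e^\varphi=e$, used in the inverse step; this follows from applying the quasi-endomorphism property to $e\cdot e$ and cancelling. Neither is a serious obstacle, so the proof is essentially the short computation above.
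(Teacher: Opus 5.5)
Your proof is correct. You write $h^{-1}gh = g[g,h]$, apply normality on one side and the quasi-endomorphism property on the other; the latter is legitimate because $g$ commutes with the central element $[g,h]$. This gives $[g^\varphi,h]=[g,h]^\varphi$. Your derivation of the second equality via $[h,g]=[g,h]^{-1}$ and $(z^{-1})^\varphi=(z^\varphi)^{-1}$, including the check $e^\varphi=e$, is also sound.

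The paper gives no proof of this lemma; it only quotes it from \cite{LioutMyas}. Your computation is the natural argument for it.
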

	
	\begin{rem}\label{remark:comms_cons}
		Let $G$ be a two-step nilpotent $A$-group. Then for any $z \in G'$, $\varphi, \psi \in \Gamma(G)$, the following equality holds: 
		\[\left(z^\varphi\right)^\psi = \left(z^\psi\right)^\varphi.\]
		Indeed, let $z = \prod [g_i, h_i] \in G'$ for some $g_i, h_i \in G$. Then, using axiom (4) from \textit{Definition \ref{definition:ExpGroup}}, $z^\varphi = \left(\prod [g_i, h_i]\right)^\varphi = \prod \left([g_i, h_i]\right)^\varphi$. Thus, it is enough for us to check the equality being verified only for $z = [g, h]$ for some $g, h \in G$. Let us recall that in a two-step nilpotent group the condition $G' \subseteq Z(G)$ is satisfied. Then, using \textit{Lemma \ref{lemma:comms}},
		$\left(z^\varphi\right)^\psi = \left([g, h]^\varphi\right)^\psi = \left[g^\varphi, h\right]^\psi = [g^\varphi, h^\psi] = \left[g, h^\psi\right]^\varphi = \left([g, h]^\psi\right)^\varphi = \left(z^\psi\right)^\varphi$.
		
		In particular, by \textit{Proposition \ref{propos:centroid_max}} and \textit{Lemma \ref{lemma:comms}}, for any $g, h \in G$ and any $a, b \in A$, the equality $\left[g^a, h^b\right] = [g, h]^{a b} = [g, h]^{b a}$ holds.
	\end{rem}
	
	\begin{defi}[\cite{PhHall}]
		A commutative integral domain $R$ is called a \textit{binomial domain} if $R$ contains $\mathbb{Z}$ as a subring and for every element $\lambda \in R$ and every positive integer $n$, the ring $R$ has the following binomial coefficient:
		\[C_\lambda^n = \frac{\lambda (\lambda - 1) \cdots (\lambda - n + 1)}{n!}.\]
	\end{defi}
	
	\begin{defi}[\cite{PhHall}]\label{def:Hall_exp}
		Let $G$ be a nilpotent group, and $R$ be a binomial domain. $G$ is called a Hall $R$-exponential group (or Hall $R$-completion) if the following axioms are satisfied:
		\begin{enumerate}[(1)]
			\item $\forall \lambda \in R$ $\forall g \in G$ $g^{1} = g$, $g^{0} = e$, $e^\lambda = e$;
			\item $\forall \lambda,\mu \in R$ $\forall g \in G$ $g^{\lambda+\mu} = g^\lambda g^\mu$, $g^{\lambda\mu} = \left(g^\lambda\right)^\mu$;
			\item $\forall \lambda \in R$ $\forall g,h \in G$ $\left(h^{-1} g h\right)^\lambda = h^{-1} g^\lambda h$;
			\item $\forall \lambda \in R$ $\forall g_1, \ldots, g_n \in G \ $ $g_1^\lambda g_2^\lambda \cdots g_n^\lambda = \tau_1^\lambda(\overline{g}) \tau_2^{C_\lambda^2} (\overline{g}) \cdots \tau_c^{C_\lambda^c} (\overline{g})$, where $\overline{g} = (g_1, \ldots, g_n)$, $\tau_i(\overline{g})$ are the Petresco words, and $c$ is the nilpotency class of $G$. Let us recall that the $i$-th Petresco word $\tau_i(\overline{g})$ is determined recurrently from the relation $g_1^i g_2^i \cdots g_n^i = \tau_1^i(\overline{g}) \tau_2^{C_i^2}(\overline{g}) \cdots \tau_{i-1}^{C_i^{i-1}}(\overline{g}) \tau_i(\overline{g})$. In particular, $\tau_1(\overline{g}) = g_1 g_2 \cdots g_n$, and $\tau_2(\overline{g}) = \tau_1^{-2}(\overline{g}) g_1^2 g_2^2 \cdots g_n^2 = (g_1 g_2 \cdots g_n)^{-2} g_1^2 g_2^2 \cdots g_n^2$.
		\end{enumerate}
	\end{defi}
	\noindent
	It is easy to see that if the group $G$ is a Hall $R$-exponential group (\textit{Definition \ref{def:Hall_exp}}), then it is also an $R$-exponential group (\textit{Definition \ref{definition:ExpGroup}}). Although this is written in \cite{MyasRemesl}, we will show it in more detail. Let $\lambda \in R$, $g, h \in G$ and $[g, h] = e$. Then, by axiom (4) from \textit{Definition \ref{def:Hall_exp}}, $g^\lambda h^\lambda = (gh)^\lambda \tau_2^{C_\lambda^2} (g, h) \cdots \tau_c^{C_\lambda^c} (g, h)$. Let us show that $\tau_i(g, h) = e$ for all $i > 1$. For $i = 2$ it is obvious: $\tau_2(g, h) = (g h)^{-2} g^2 h^2 = e$. Let it already be proven that $\tau_{j}(g,h) = e$ for all $j < i$. Then $\tau_i(g, h) = \tau_{i-1}^{-C_i^{i-1}}(g,h) \cdots \tau_2^{-C_i^2} (g, h) \tau_1^{-i}(g,h) g^i h^i = \tau_1^{-i}(g,h) g^i h^i = (gh)^{-i} g^i h^i = e$. Thus, $(gh)^\lambda = g^\lambda h^\lambda$, and therefore axiom (4) from \textit{Definition \ref{definition:ExpGroup}} is satisfied.
	
	Note that the set of two-step nilpotent Hall $R$-groups is a variety, since it is defined only by identities in the language $\mathcal{L}_{gr, R} = \{\cdot^{(2)}, ^{{-1}^{(1)}}, e\} \cup \{^{\alpha^{(1)}} \ | \ \alpha \in R\}$.
	By $G^R$ we denote the Hall $R$-completion of a group $G$ (see, for example, \cite{PhHall, LioutMyas}). If it is clear from the context that $G$ is the Hall $R$-exponential group, then we will simply write $G$.
	
	Following \cite{13}, we will provide a definition of the main object studied in this work.
	
	\begin{defi}[\cite{RemeslTreier}]
		Let $\Delta$ be an undirected simple graph with vertex set $X = V(\Delta)$, $F(X)$ be a free group with generator set $X$, and let 
		\[E = \{[x_i, x_j] \in F(X) \ | \ x_i, x_j \in X \text{ and $x_i$ and $x_j$ are adjacent in the graph $\Delta$}\}.\]
		Then the group $G_\Delta^R$ with set of generators $X$ and set of definability relations $E$, considered as a group in the variety of two-step nilpotent Hall $R$-groups, is called a two-step nilpotent \textit{partially commutative} (or \textit{graph}) Hall $R$-\textit{group}. And the graph $\Delta$ is called the \textit{commutativity graph} of the two-step nilpotent partially commutative Hall $R$-group $G_\Delta^R$.
	\end{defi}
	
	It can be seen that every two-step nilpotent graph Hall $R$-group $G_\Delta^R$ is in fact the Hall $R$-completion of a two-step nilpotent graph $\mathbb{Z}$-group $G_\Delta^{\mathbb{Z}}$ (the standard definition of a graph group that is not related to Hall $R$-completion can be found, for example, in \cite{13}). Moreover, since the set of two-step nilpotent Hall $R$-groups is a variety, every graph Hall $R$-group $G_\Delta^R$ with a set of definability relations $E$ is a free object. That is, there exists an $R$-homomorphism from $G_\Delta^R$ to any two-step nilpotent Hall $R$-group with definability relations $E$.
	
	\begin{defi}[\cite{MyasRemesl}]
		Let $G$ be an $A$-group. An element $g \in G$ is called \textit{periodic} if $g^\alpha = e$ for some $\alpha \in A \setminus \{0\}$. A group $G$ that does not contain non-identity periodic elements is called an \textit{$A$-torsion-free} group.
	\end{defi}
	
	It is known that $G_\Delta^{\mathbb{Z}}$ is $\mathbb{Z}$-torsion-free (see, for example, \cite{13}). In the next remark, we note that the same is true for any $G_\Delta^R$.
	
	\begin{rem}\label{remark:torsionFree}
		Every two-step nilpotent graph Hall $R$-group $G_\Delta^R$ is an $R$-torsion-free group. Indeed, let us assume that $g \in G_\Delta^R \setminus G_\Delta'$ and $\alpha \in R \setminus \{0\}$ such that $g^\alpha = e$. Let us consider the free Hall $R$-group $F_1^R$ generated by a single element $y$. Since $F_1^R$ is Abelian, it is two-step nilpotent and satisfies the definability relations of the group $G_\Delta^R$. Thus, there is an $R$-homomorphism $\varphi : G_\Delta^R \rightarrow F_1^R$ such that $\varphi(g) = y$. Then $\varphi(g^\alpha) = \varphi(g)^\alpha = y^\alpha \ne e$ and, consequently, $g^\alpha \ne e$. To understand why there is $R$-torsion-free on the elements of $\left(G_\Delta^R\right)' \setminus \{e\}$, it is sufficient to consider instead of $F_1$ a free two-step nilpotent Hall $R$-group $F_2$ (realized, for example, as $UT_3(R)$) and carry out similar reasoning for the case of $g \in \left(G_\Delta^R\right)' \setminus \{e\}$, i.\,e., for the case of $g = [f, h] \ne e$ for some $f, h \in G_\Delta^R \setminus \left(G_\Delta^R\right)'$. 
	\end{rem}
	
	Throughout what follows, by a partially commutative group $G_\Delta$ we will always mean, unless this causes confusion, a two-step partially commutative Hall $R$-group $G_\Delta^R$. In addition, the set of generators will be denoted by $X$.
	
	Let $\Delta$ be the commutativity graph of some graph group $G_\Delta$. Obviously, if $\Delta$ is not a clique, then for some generator $x_i$ in $G_\Delta$ there is a generator $x_j$ that does not commute with it. If $\Delta$ is a clique, then $G_\Delta$ is an Abelian $R$-group. Also note that if $\Delta$ has exactly one vertex, then $G_\Delta$ is also Abelian. If $\Delta$ is the empty graph (i.\,e., a graph without edges), then $G_\Delta$ is a free two-step nilpotent Hall $R$-group.
	
	\textit{A non-commutativity graph} of a graph group $G_\Delta$ is a graph $\Delta'$ with vertex set $X$ such that vertices $x_i$ and $x_j$ are adjacent in $\Delta'$ if and only if $x_i$ and $x_j$ are not adjacent in $\Delta$.
	
	Let $g \in G_\Delta$ be an element of the form $g = x_1^{\alpha_1} \cdots x_n^{\alpha_n} z$, where $x_i \in X$, $z \in G_\Delta'$, $\alpha_i \in R \setminus \{0\}$, and if $x_i$ occurs in the form $g$ several times with the exponents $\alpha_{i_1}, \ldots, \alpha_{i_k}$, then $\alpha_{i_1} + \ldots + \alpha_{i_k} \ne 0$. Then we denote $\overline{g} = x_1^{\alpha_1} \cdots x_n^{\alpha_n}$. Clearly, the $\overline{g}$ notation is ambiguous, since an element $g = x_1^{\alpha_1} \cdots x_n^{\alpha_n} z$ can be previously rewritten, for example, as $g = x_2^{\alpha_2} x_1^{\alpha_1} x_3^{\alpha_3} \cdots x_n^{\alpha_n} z [x_1^{\alpha_1}, x_2^{\alpha_2}]$ and then as $\overline{g} = x_2^{\alpha_2} x_1^{\alpha_1} x_3^{\alpha_3} \cdots x_n^{\alpha_n}$. We, however, will use this notation only when writing in the commutator or in the context of a commutation relation. For a partially commutative two-step nilpotent $R$-group, it doesn't matter which version of the $\overline{g}$ notation we use: $[g, h] = [\overline{g}, h] = [g, \overline{h}] = [\overline{g}, \overline{h}]$ for any $g, h \in G_\Delta$.
	
	For an arbitrary element $g \in G_\Delta$, we define the set $\alpha(g) \subseteq X$ as the set of all generators $x_i$ of $G_\Delta$ from the representation $\overline{g}$. As will be shown below in \textit{Proposition \ref{propos:baza}}, such a definition of $\alpha(g)$ does not depend on which version of $\overline{g}$ is being considered.
	
	For $x \in X$, we define
	\[x^\perp = \{y \in X \ | \ y \text{ is adjacent to } x \text{ in the graph } \Delta\} \cup \{x\}.\]
	For non-empty $Y \subseteq X$, we define $Y^\perp = \bigcap_{x_i \in Y} x_i^\perp$. Then, for example, $X^\perp = X \cap Z(G_\Delta)$ is the set of all central vertices of the graph $\Delta$. Also, by definition we put that $\emptyset^\perp = X$.
	
	Recall \cite{AmMyas} that a subgroup $H \leq G$ is called an \textit{$R$-subgroup} of $G$ if it is invariant under raising to the $R$-exponent, and a subset $X \subseteq G$ \textit{$R$-generates} an $R$-subgroup $\langle X\rangle_R$, which is the smallest $R$-subgroup of $G$ containing $X$. Everywhere below, by $\langle X\rangle$ we will mean an $R$-subgroup $R$-generated by $X$, and by subgroups we will mean $R$-subgroups.
	
	Let $Y \subseteq X$, $\Delta(Y)$ be the induced subgraph of $\Delta$ on the vertices of $Y$. Then by $\langle Y \rangle$ we denote the $R$-subgroup of $G_\Delta$ with the set of generators $Y$ and defining relations constructed from $\Delta(Y)$.
	
	\begin{rem}
		Let $\varphi, \psi \in \Gamma(G)$, $g, h \in G$. Then $[g, h] = e$ implies $[g^\varphi, h^\psi] = e$. Indeed, this follows from \textit{Lemma \ref{lemma:N_G_commut}}.
	\end{rem}
	
	\begin{defi}[\cite{13}]
		An element $g \in G_\Delta$ is called a \textit{block} if $\Delta'(\alpha(g))$ is a connected graph.
	\end{defi}
	\noindent
	Note that if $w$ is a block element and $|\alpha(w)| > 1$, then $\alpha(w) \cap \alpha(w)^\perp = \emptyset$.
	
	It is obvious that if $g = x_1^{\alpha_1} \cdots x_n^{\alpha_n} z$ is a block element, then it can be rewritten as a word $g = x_{i_1}^{\beta_{i_1}} \cdots x_{i_r}^{\beta_{i_r}} z'$, where $z' \in G_\Delta'$, and the generators $x_{i_1}, \ldots, x_{i_r} \in X$ are chosen in such an order that the path $P = x_{i_1} \ldots x_{i_r}$ in the graph $\Delta'(\alpha(g))$ passes through all vertices of $\Delta'(\alpha(g))$, and then as $\overline{g} = x_{i_1}^{\beta_{i_1}} \cdots x_{i_r}^{\beta_{i_r}}$. By default, we will assume that any block element is already represented this way.
	
	Let $g \in G_\Delta$. The element $\overline{g}$ can be written as $\overline{g} = q_1 \cdots q_n$, where each $q_i$ is a block element. We call $q_1 \cdots q_n$ the \textit{block decomposition of $g$}. Note that in such a decomposition $[q_i, q_j] = e$ for all $i,j = \overline{1, k}$. It is also easy to see that the block decomposition of an element may not be unique: it is enough to simply change the order of elements $q_i$. Let $\Delta'(\alpha(g)) = \Delta_1 \sqcup \ldots \sqcup \Delta_k$, where $\Delta_1, \ldots, \Delta_k$ are the connected components with more than one vertex in the non-commutativity graph $\Delta'$ of the graph $\Delta$. Note that if $x \in X \cap Z(G_\Delta)$, then $x \not\in \Delta_i$ for all $i = \overline{1, k}$. Then $\overline{g}$ can be written as $\overline{g} = w_1 \cdots w_k$, where $\Delta'(\alpha(w_i)) \subseteq \Delta_i$. At the same time, for some $w_i$ there may be $\overline{w}_i = e$. We call $w_1 \cdots w_k$ the \textit{decomposition of $g$ into block projections}, and $w_i$ the \textit{block projection of $g$ onto the component $\Delta_i$}. We also note that in such a decomposition $[w_i, w_j] = e$ for all $i,j = \overline{1, k}$. In general, a block projection may not be a block element. It is also easy to see that the block projection of an element may not be unique: in addition to the fact that it is possible to change the order of connected components, within the block decomposition of each $w_i$, it is possible to rearrange the elements $q_i$. 
	
	\section{Structure of the centralizer of the block element in $G_\Delta^R$}\label{section:C_block}
	
	From \cite[Lemma 1]{MischTreier} we know the general form of the centralizer of a block element of a graph two-step nilpotent $\mathbb{Q}$-group. In \cite[Lemma 2]{RemeslTreier} this result was generalized to the case of an arbitrary field of characteristic zero. Furthermore, in \cite[Lemma 1]{MischTimosh} this was generalized to the case of binomial Euclidean domains. On the other hand, for the case where the ring is an arbitrary binomial domain, similar results are not true: there are binomial domains (that are not Euclidean domains) such that when considering groups over which there exist two commuting block elements $w_1$ and $w_2$, for which $\alpha(w_1) = \alpha(w_2)$, and $w_2 \not\in \langle w_{1,0} \rangle$ where $w_{1,0}$ is some root element of $w_1$. Indeed, consider, for example, the $binom(\mathbb{Z}[\sqrt{-5}])$-group $G_\Delta$, where $binom(\mathbb{Z}[\sqrt{-5}])$ is the extension of $\mathbb{Z}[\sqrt{-5}]$ to a binomial domain by adding the missing formal binomial coefficients, and $\Delta$ is a graph without edges on two vertices $x_1$ and $x_2$. Then $\left[x_1^2 x_2^{1-\sqrt{-5}}, x_1^{1+\sqrt{-5}} x_2^3\right] = e$. Thus, there are binomial domains in which searching for the root element can lead to different results. In this section we generalize \cite[Lemma 1]{MischTimosh} to the case of Hall $R$-groups, where $R$ is an arbitrary binomial domain.
	
	Let $G_\Delta$ be the Hall $R$-exponential group, $R$ is an arbitrary binomial domain, $X = \{x_1, \ldots, x_n\}$ be the set of vertices in the graph $\Delta$.
	
	The following \textit{Proposition \ref{propos:baza}}, which is actually \cite[Proposition 1]{RemeslTreier}, we will present with a proof.
	
	\begin{prop}\label{propos:baza}
		The following statements are true:
		\begin{enumerate}
			\item the factor group $\overline{G}_\Delta = G_\Delta / G_\Delta'$ has the structure of a free module of rank $n$ over $R$ with a base of images of elements $x_1, \ldots, x_n$;
			\item the commutator subgroup $G_\Delta'$ of the group $G_\Delta$ has the structure of a free module over the ring $R$ with base $y_{i,j} = [x_i, x_j]$, where $y_{i,j} = [x_i, x_j] \ne e$ in the group $G_\Delta$ and $i < j$;
			\item any element in $G_\Delta$ is uniquely representable as
			$x_1^{\alpha_1} \cdots x_n^{\alpha_n} \prod y_{k,l}^{\beta_{k,l}},$
			where $x_i \in X$, $y_{k,l} = [x_k, x_l] \ne e$, $k < l$, and $\alpha_i, \beta_{i,j} \in R$.
		\end{enumerate}
	\end{prop}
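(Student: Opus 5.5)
The plan is to build an explicit model of $G_\Delta$ and compare it with the group given by generators and relations. The presentation gives spanning statements for free. The model, via the universal property of $G_\Delta$ as a free object in the variety of two-step nilpotent Hall $R$-groups, gives the independence statements.

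\emph{Spanning.} Since $G_\Delta$ is two-step nilpotent, $G_\Delta' \subseteq Z(G_\Delta)$. Hence every commutator is central and bilinear in its arguments: $[gh,t]=[g,t][h,t]$. By Remark \ref{remark:comms_cons} we also have $[g^\alpha,h^\beta]=[g,h]^{\alpha\beta}$. Every element of $G_\Delta$ is a product of $R$-powers of generators $x_i^{\pm\alpha}$. Using $x_j^\beta x_i^\alpha = x_i^\alpha x_j^\beta [x_j^\beta,x_i^\alpha] = x_i^\alpha x_j^\beta y_{i,j}^{-\alpha\beta}$, one collects all such products into the form $x_1^{\alpha_1}\cdots x_n^{\alpha_n} z$ with $z\in G_\Delta'$. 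One point needs care: the Hall axiom (4) must be used to see that an $R$-power of a product again has this form. For two-step nilpotent groups,
\[(gh)^\lambda = g^\lambda h^\lambda [h,g]^{C_\lambda^2},\]
so this holds. Likewise $G_\Delta'$ is $R$-spanned by the $[x_i,x_j]$, with $i<j$ after using $[x_j,x_i]=[x_i,x_j]^{-1}$. The relations give $[x_i,x_j]=e$ for adjacent $x_i,x_j$, so $G_\Delta'$ is spanned by the $y_{i,j}$ with $x_i,x_j$ non-adjacent. This shows that every element has a representation as in (3), and that the images of the $x_i$ span $\overline{G}_\Delta$ and the $y_{i,j}$ span $G_\Delta'$.

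\emph{Independence via a model.} Let $M = R^n \times R^{m}$, where $m$ is the number of non-adjacent pairs $i<j$. Define the multiplication
\[(a,b)(a',b') = \bigl(a+a',\; b+b'+\omega(a,a')\bigr), \qquad \omega(a,a')_{i,j} = a_j a'_i,\]
a Heisenberg-type group built from a bilinear map that only records the non-adjacent coordinates $(i,j)$. Define $R$-powers by
\[(a,b)^\lambda = \bigl(\lambda a,\; \lambda b + C_\lambda^2\,\omega(a,a)\bigr),\]
where the binomial coefficient exists because $R$ is a binomial domain. One then verifies the following.
\begin{enumerate}
\item $M$ is a two-step nilpotent Hall $R$-group. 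The key identity to check is axiom (4), which reduces to the two-variable formula above together with polynomial identities among binomial coefficients. These identities hold in $\mathbb{Q}$ and hence in $R$, since $R$ is torsion-free over $\mathbb{Z}$.
\item The elements $e_i=(\varepsilon_i,0)$ satisfy $[e_i,e_j]=e$ for adjacent pairs, and $[e_i,e_j]$ is the basis vector $\varepsilon_{i,j}$ of $R^m$ for non-adjacent $i<j$.
\end{enumerate}
Freeness gives an $R$-homomorphism $G_\Delta\to M$ with $x_i\mapsto e_i$. It sends $x_1^{\alpha_1}\cdots x_n^{\alpha_n}\prod y_{k,l}^{\beta_{k,l}}$ to an element whose first coordinate is $(\alpha_i)$ and whose second coordinate is $(\beta_{k,l})$ plus a term determined by the $\alpha$'s.

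Uniqueness in (3) follows. If two normal forms give the same element, their images agree, so the $\alpha$'s coincide; the correction terms then coincide, so the $\beta$'s coincide. This also shows that $y_{i,j}\neq e$ exactly for non-adjacent pairs. Statement (1) follows because the image of $G_\Delta'$ lies in $0\times R^m$, so the induced map $\overline{G}_\Delta\to R^n$ is an isomorphism, and $\overline{G}_\Delta$ is an $R$-module via the induced $R$-action, which is well defined since $G_\Delta'$ is an $R$-subgroup. Statement (2) follows since the restriction of the map to $G_\Delta'$ is a module isomorphism onto $R^m$.

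The main obstacle is verifying that $M$ satisfies all the Hall axioms, especially axiom (4) for an arbitrary number of factors and $g^{\lambda\mu}=(g^\lambda)^\mu$. I would handle these by noting that each required identity is polynomial in $\lambda,\mu$ with $\mathbb{Q}$-coefficients and holds for all integers. It therefore holds identically in $\mathbb{Q}[\lambda,\mu]$, and hence in $R$. Alternatively, $M$ can be identified with a quotient of $UT_3$-type matrix groups over $R$, which are known Hall $R$-groups.
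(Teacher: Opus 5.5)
Your proof is correct, but it takes a different route from the paper's.

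\textbf{The paper's route.} The paper never builds a global model.
\begin{enumerate}
\item It gets (1) by noting that the defining relations of $G_\Delta$ lie in the commutator subgroup of the free two-step nilpotent Hall $R$-group. Hence $\overline{G}_\Delta$ is just that group's abelianization.
\item It proves independence of the $y_{i,j}$ one pair at a time. Assuming $\prod y_{i,j}^{\alpha_{i,j}}=e$ with $\alpha_{k,l}\neq 0$, it maps $x_k\mapsto a$, $x_l\mapsto b$ and all other generators to $e$ in $UT_3(R)$. This kills every $y_{i,j}$ except $y_{k,l}$.
\item It gets (3) by collection together with (1) and (2).
\end{enumerate}

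\textbf{Your route.} You build one Heisenberg-type central extension $M=R^n\times R^m$ whose cocycle records exactly the non-adjacent coordinates. A single homomorphism $G_\Delta\to M$ then detects all coordinates at once.

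\textbf{Comparison.} The paper's argument is shorter for two reasons. It takes the Hall axioms for free from known facts about $UT_3(R)$ and free Hall $R$-groups. Its test maps also only have to see one pair at a time. Your argument is self-contained, at the cost of checking the Hall axioms for $M$ by hand. Those checks do go through. Everything reduces to
\[C^2_{\lambda+\mu}=C^2_\lambda+C^2_\mu+\lambda\mu,\qquad C^2_{\lambda\mu}=\mu C^2_\lambda+\lambda^2C^2_\mu,\qquad \lambda+2C^2_\lambda=\lambda^2.\]
These hold in $R$ after multiplying by $2$ and cancelling, which is allowed because $R$ is a domain of characteristic zero.

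In return you get more than the paper:
\begin{enumerate}
\item The map $G_\Delta\to M$ is in fact an isomorphism, so you obtain an explicit coordinate model of $G_\Delta$.
\item The rank-$n$ claim in (1) comes out directly.
\item You also get the criterion that $y_{i,j}\neq e$ exactly for non-adjacent pairs.
\item Your spanning step, via $(gh)^\lambda=g^\lambda h^\lambda[h,g]^{C^2_\lambda}$, makes explicit the closure under $R$-powers that the paper's collection argument leaves implicit.
\end{enumerate}

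\textbf{A sign slip.} With $[g,h]=g^{-1}h^{-1}gh$ and $\omega(a,a')_{i,j}=a_ja'_i$, one actually gets $[e_i,e_j]=-\varepsilon_{i,j}$. Taking $\omega(a,a')_{i,j}=a_ia'_j$ gives the sign you state. Either way the sign plays no role in the argument.
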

	
	\begin{proof} \hfill
		\begin{enumerate}
			\item  This is obvious, since the set of definability relations of the $R$-group $G_\Delta$ belongs to the commutator subgroup of the corresponding free two-step nilpotent group.
		
			\item Assume that $\prod\limits_{i < j} y_{i,j}^{\alpha_{i,j}} = e$ and least one $\alpha_{k,l} \in R$ is nonzero. Let $F_2$ be a free two-generated two-step nilpotent Hall $R$-group with generators $a, b$. For definiteness, we can assume that $F_2$ is realized as $UT_3(R)$. In particular, the inequality $[a, b]^{\alpha_{k,l}} \ne e$ holds in $F_2$. There exists an $R$-homomorphism $\varphi : G_\Delta \rightarrow F_2$ such that $\varphi(x_k) = a$, $\varphi(x_l) = b$, and $\varphi(x_i) = e$ for all $i$ such that $l \ne i \ne k$. Then $\varphi(\prod\limits_{i < j} y_{i,j}^{\alpha_{i,j}}) = \varphi(y_{k,l}^{\alpha_{k,l}}) = \varphi(y_{k,l})^{\alpha_{k,l}} \ne e$. The second point follows from the resulting contradiction.
		
			\item Since $G_\Delta$ is two-step nilpotent, then $x_j^\beta x_i^\alpha = x_i^\alpha x_j^\beta [x_i, x_j]^{-\alpha \beta}$, where $i < j$. Consequently, any element of the group $G_\Delta$ can be written in the form described in the assertion of the third point. The uniqueness of the representation follows from the previous statements.
			\qedhere
		\end{enumerate}
	\end{proof}
	
	The following \textit{Proposition \ref{propos:det}} is a generalization of \cite[ Proposition 2]{MischTreier}.
	
	\begin{prop}\label{propos:det}
		Let $f = x_1^{\alpha_1} \cdots x_n^{\alpha_n} z_1$ and $g = x_1^{\beta_1} \cdots x_n^{\beta_n} z_2$, where $z_1, z_2 \in G_\Delta'$. Then 
		\[[f, g] = \prod y_{i,j}^{\delta_{i,j}},\] 
		where $y_{i,j} = [x_i, x_j] \ne e$, $\delta_{i,j} = \begin{vmatrix} 
			\alpha_i & \beta_i \\ 
			\alpha_j & \beta_j \notag 
		\end{vmatrix}$, $i < j$.
	\end{prop}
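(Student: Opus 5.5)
Since $G_\Delta$ is two-step nilpotent, $G_\Delta' \subseteq Z(G_\Delta)$, so the commutator map $(f,g)\mapsto [f,g]$ is bilinear. In particular, it ignores central factors. The plan is to exploit this bilinearity to reduce $[f,g]$ to commutators of powers of generators, and then apply the $R$-exponent rule from Remark \ref{remark:comms_cons}.

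\textbf{Step 1: discard central factors.} Since $z_1, z_2 \in G_\Delta' \subseteq Z(G_\Delta)$, we have $[f,g] = [\overline{f}, \overline{g}]$, where $\overline{f} = x_1^{\alpha_1}\cdots x_n^{\alpha_n}$ and $\overline{g} = x_1^{\beta_1}\cdots x_n^{\beta_n}$.

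\textbf{Step 2: expand multiplicatively.} In a two-step nilpotent group the identities $[ab,c] = [a,c]^b[b,c] = [a,c][b,c]$ and $[a,bc] = [a,c][a,b]^c = [a,b][a,c]$ hold, because all commutators are central. Applying them repeatedly gives
\[
[\overline{f}, \overline{g}] = \prod_{i,j} \left[x_i^{\alpha_i}, x_j^{\beta_j}\right].
\]
The order of the factors is irrelevant, since they all lie in the abelian subgroup $G_\Delta'$.

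\textbf{Step 3: pull out the $R$-exponents.} By Remark \ref{remark:comms_cons}, where $R$ acts as a ring of scalars via Proposition \ref{propos:centroid_max}, we have $[x_i^{\alpha_i}, x_j^{\beta_j}] = [x_i,x_j]^{\alpha_i\beta_j}$. We then sort the terms:
\begin{itemize}
\item diagonal terms with $i=j$ vanish;
\item terms with $[x_i,x_j]=e$, i.e.\ $x_i, x_j$ adjacent in $\Delta$, vanish;
\item for $i>j$, we rewrite $[x_i,x_j]^{\alpha_i\beta_j} = y_{j,i}^{-\alpha_i\beta_j}$.
\end{itemize}

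\textbf{Step 4: collect terms.} Grouping the exponents for a fixed pair $i<j$ uses the module structure of $G_\Delta'$ from Proposition \ref{propos:baza}(2), namely axiom (2) together with the commutativity of $G_\Delta'$. The exponent of $y_{i,j}$ becomes $\alpha_i\beta_j - \alpha_j\beta_i = \delta_{i,j}$, which is exactly the claimed formula.

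\textbf{Main obstacle.} The only delicate point is justifying $[x^\alpha, y^\beta] = [x,y]^{\alpha\beta}$ for non-integer $\alpha,\beta \in R$. This cannot be obtained from the integer-power expansion alone. I would invoke Lemma \ref{lemma:comms}, applied to the centroid elements induced by $\alpha$ and $\beta$, which is valid because every commutator is central. This is the content of the last sentence of Remark \ref{remark:comms_cons}. Once this is in place, the rest is routine bookkeeping in the abelian group $G_\Delta'$.
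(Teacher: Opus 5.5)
Your proposal is correct and takes essentially the same route as the paper. Both arguments discard the central factors, expand $[\overline{f},\overline{g}]$ bilinearly using two-step nilpotency, rewrite each $[x_i^{\alpha_i},x_j^{\beta_j}]$ as $y_{i,j}^{\alpha_i\beta_j}$ (or as $y_{j,i}^{-\alpha_i\beta_j}$ when $i>j$), and collect exponents; your explicit appeal to Remark \ref{remark:comms_cons} for pulling out non-integer exponents only makes precise a step the paper leaves implicit.
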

	
	\begin{proof}
		By \textit{Proposition \ref{propos:baza}}, $[f, g] = \prod y_{i,j}^{\gamma_{i,j}}$. Let us show this moment in more detail. 
		Since $G_\Delta$ is nilpotent of class two, the identity $[g^{\alpha} h^{\beta}, t^\gamma] = [g^{\alpha}, t^\gamma] [h^{\beta}, t^\gamma]$ holds for any $g, h, t \in G_\Delta$ and $\alpha, \beta, \gamma \in R$. Thus, 
		\begin{equation*}
			\begin{split}
				& [f, g] = [x_1^{\alpha_1} \cdots x_n^{\alpha_n}, x_1^{\beta_1} \cdots x_n^{\beta_n}] = \\
				& = [x_1^{\alpha_1}, x_1^{\beta_1} \cdots x_n^{\beta_n}] [x_2^{\alpha_2}, x_1^{\beta_1} \cdots x_n^{\beta_n}] \cdots [x_n^{\alpha_n}, x_1^{\beta_1} \cdots x_n^{\beta_n}] = \\
				& = [x_1^{\alpha_1}, x_1^{\beta_1}] [x_1^{\alpha_1}, x_2^{\beta_2}] \cdots [x_1^{\alpha_1}, x_n^{\beta_n}] [x_2^{\alpha_2}, x_1^{\beta_1}] \cdots [x_2^{\alpha_2}, x_n^{\beta_n}] \cdots [x_n^{\alpha_n}, x_1^{\beta_1}] \cdots [x_n^{\alpha_n}, x_n^{\beta_n}] = \\
				& = y_{1,2}^{\alpha_1 \beta_2} y_{1,3}^{\alpha_1 \beta_3} \cdots y_{1,n}^{\alpha_1 \beta_n} y_{1,2}^{-\alpha_2 \beta_1} y_{2,3}^{\alpha_2 \beta_3} \cdots y_{2,n}^{\alpha_2 \beta_n} \cdots y_{1,n}^{-\alpha_n \beta_1} \cdots y_{n-1, n}^{-\alpha_n \beta_{n-1}}.
			\end{split}
		\end{equation*}
		Thus, $\gamma_{i,j}$ from the equality $[f, g] = \prod y_{i,j}^{\gamma_{i,j}}$ is the same as $\alpha_i \beta_j - \alpha_j \beta_i = \delta_{i,j}$.
	\end{proof}

	\begin{defi}\label{def:W}
		For a block element $w \in G_\Delta$, having the representation $\overline{w} = x_{i_1}^{\beta_{i_1}} \cdots x_{i_r}^{\beta_{i_r}}$, where $\beta_{i_j} \ne 0$ for all $j = \overline{1, r}$, we define the set
		\begin{equation*}
			\begin{split}
				W(w) = &\{u \in G_\Delta \ | \ \overline{u} = x_{i_1}^{\gamma_{i_1}} \cdots x_{i_r}^{\gamma_{i_r}}, \text{ and } k_1 \cdot (\beta_{i_1}, \ldots, \beta_{i_r}) = k_2 \cdot (\gamma_{i_1}, \ldots, \gamma_{i_r})
				\\ 
				&\text{ for some } k_1, k_2 \in R 
				\text{ such that $k_1 \ne 0$ or $k_2 \ne 0$}\}.
			\end{split}
		\end{equation*}
		If $u \in W(w)$, then we will sometimes say that $u$ and $w$ are \textit{linearly dependent}. Note that $\overline{w} \in W(w)$ and $e = x_{i_1}^0 \cdots x_{i_r}^0 \in W(w)$. Also note that if $h \in W(w)$, then $\overline{h} = e$ (i.\,e., $h \in G_\Delta'$) or $\alpha(h) = \alpha(w)$.
	\end{defi}
	
	It is easy to see for $\overline{w}_1 \ne e \ne \overline{w}_2$ that if $w_1 \in W(w_2)$, then $w_2 \in W(w_1)$. It is also obvious that if $\overline{w}_1 \ne e$, $\overline{w}_2 \ne e$, $\overline{w}_3 \ne e$, $w_2 \in W(w_1)$, $w_3 \in W(w_2)$, then $w_3 \in W(w_1)$. Thus, the linear dependence of two block elements in $G_\Delta$ is an equivalence relation up to multiplication by elements from $G_\Delta'$. In particular, if $w_2 \in W(w_1)$, then $W(w_2) = W(w_1)$, and if $w_2 \not\in W(w_1)$, then $W(w_2) \cap W(w_1) = G_\Delta'$. 
	
	\begin{rem}\label{remark:C_gen}
		If $|\alpha(w)| = 1$ and $\alpha(w) = \{x_{i_s}\}$, then $C(w) = \langle x_{i_s}^\perp \rangle \cdot G_\Delta' \supseteq W(w) \cdot Z(G_\Delta)$. Indeed, if $\overline{w} = x_{i_s}^{\alpha}$, $w \not\in Z(G_\Delta)$, $[w, h] = e$, and $\overline{h} = x^{\beta_1}_{j_1} \cdots x^{\beta_m}_{j_m}$, then, using \textit{Remark \ref{remark:comms_cons}} and \textit{Proposition \ref{propos:baza}}, $[w, h] = [\overline{w}, \overline{h}] = [x^\alpha_{i_s}, x^{\beta_1}_{j_1} \cdots x^{\beta_m}_{j_m}] = [x^\alpha_{i_s}, x^{\beta_1}_{j_1}] 
		\cdots [x^\alpha_{i_s}, x^{\beta_m}_{j_m}] = [x_{i_s}, x_{j_1}]^{\alpha \beta_1} 
		\cdots [x_{i_s}, x_{j_m}]^{\alpha \beta_m}$. Thus, by \textit{Proposition \ref{propos:baza}}, for each $r$, $\beta_r = 0$ or $[x_{i_s}, x_{j_r}] = e$ (and, therefore, $x_{j_r} \in x_{i_s}^\perp$).
		
		From this, we have the general form of the centralizer for each generator $x_i \in X$ of the Hall $R$-group $G_\Delta$:
		\[C(x_i) = (\langle x_i \rangle \times \langle x_i^{\perp} \setminus \{x_i\} \rangle) \cdot G_\Delta' = \langle x_i^{\perp} \rangle \cdot G_\Delta'.\]
		In addition, we give as an example the centralizer of an arbitrary subset $A$ of generators:
		$C(A) = \langle A^{\perp} \rangle \cdot G_\Delta'.$
	\end{rem}
	
	Thus, the structure of centralizers of such elements differs from the case of $\mathbb{Q}$-groups only by replacing the ring with $R$. Further, we assume that $|\alpha(w)| > 1$.
	
	\begin{prop}\label{propos:W_commut}
		Let $w_1, w_2 \in G_\Delta$ be block elements, $|\alpha(w_1)| > 1$, and $|\alpha(w_2)| > 1$. Then, if $W(w_1) = W(w_2)$, then $[w_1, w_2] = e$.
	\end{prop}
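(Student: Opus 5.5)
Since $W(w_1) = W(w_2)$ and $\overline{w}_2 \ne e$ (because $|\alpha(w_2)| > 1$), we have $w_2 \in W(w_1)$. The plan is to unwind this linear dependence and compute $[w_1,w_2]$ directly with \textit{Proposition \ref{propos:det}}, showing that every exponent $\delta_{i,j}$ vanishes. By \textit{Definition \ref{def:W}}, write $\overline{w}_1 = x_{i_1}^{\beta_{i_1}} \cdots x_{i_r}^{\beta_{i_r}}$ and $\overline{w}_2 = x_{i_1}^{\gamma_{i_1}} \cdots x_{i_r}^{\gamma_{i_r}}$ (the same generators occur, since $\alpha(w_2) = \alpha(w_1)$). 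There exist $k_1, k_2 \in R$, not both zero, with $k_1 \beta_{i_s} = k_2 \gamma_{i_s}$ for all $s = \overline{1, r}$. Note that all $\beta_{i_s}$ and $\gamma_{i_s}$ are nonzero. Consequently, if $k_1 = 0$, then $k_2 \gamma_{i_1} = 0$ forces $k_2 = 0$ because $R$ is a domain, which is a contradiction. So $k_1 \ne 0$, and symmetrically $k_2 \ne 0$.

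Next, by the remark preceding \textit{Proposition \ref{propos:det}}, $[w_1, w_2] = [\overline{w}_1, \overline{w}_2]$. Proposition \ref{propos:det} then gives
\[
[w_1, w_2] = \prod y_{i,j}^{\delta_{i,j}}, \qquad \delta_{i,j} = \beta_i \gamma_j - \beta_j \gamma_i ,
\]
where we set $\beta_i = \gamma_i = 0$ for generators outside $\alpha(w_1)$. Multiplying by $k_1 k_2$ and using $k_1 \beta = k_2 \gamma$ coordinatewise, we get
\[
k_1 \delta_{i,j} = (k_1\beta_i)\gamma_j - (k_1\beta_j)\gamma_i = k_2\gamma_i\gamma_j - k_2\gamma_j\gamma_i = 0 .
\]
Since $k_1 \ne 0$ and $R$ is an integral domain, $\delta_{i,j} = 0$ for all $i<j$. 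Hence $[w_1,w_2] = e$.

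The only delicate point is the first step: one must make sure $k_1 \neq 0$, rather than just "$k_1$ or $k_2$ nonzero". This is handled by the nonvanishing of the exponents together with the absence of zero divisors, which is precisely where the binomial domain hypothesis is used. There is no need to divide in $R$, so the argument avoids the failure of root extraction illustrated by the $binom(\mathbb{Z}[\sqrt{-5}])$ example.
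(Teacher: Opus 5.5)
Your proof is correct and follows the paper's route: write $\overline{w}_1,\overline{w}_2$ over the generators of $\alpha(w_1)$, apply Proposition~\ref{propos:det}, and show that every $\delta_{i,j}$ vanishes; you also supply the cancellation step ($k_1\delta_{i,j}=0$, then cancel in a domain) that the paper compresses into ``note that $\delta_{i_k,i_j}=0$''. Your ``delicate point'' is milder than you suggest: you can cancel whichever of $k_1,k_2$ is nonzero, since also $k_2\delta_{i,j}=\beta_i k_1\beta_j-\beta_j k_1\beta_i=0$, and only the integral-domain property of $R$ is used, not binomiality.
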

	
	\begin{proof}
		Let $\overline{w}_1 = x_{i_1}^{\beta_{i_1}} \cdots x_{i_r}^{\beta_{i_r}}$, $r > 1$ and $\beta_{i_j} \ne 0$ for all $j = \overline{1, r}$, $\overline{w}_2 = x_{i_1}^{\gamma_{i_1}} \cdots x_{i_r}^{\gamma_{i_r}}$ and there exist $k_1, k_2 \in R$ such that $k_1 \beta_{i_j} = k_2 \gamma_{i_j}$. We show that $[w_1, w_2] = e$. By \textit{Proposition \ref{propos:det}}, $[w_1, w_2] = [\overline{w}_1, \overline{w}_2] = \prod y_{i_k,i_j}^{\delta_{i_k,i_j}}$, where $y_{i_k,i_j} = [x_{i_k}, x_{i_j}] \ne e$, $\delta_{i_k,i_j} = \begin{vmatrix}
			\beta_{i_k} & \beta_{i_j} \\
			\gamma_{i_k} & \gamma_{i_j} \notag
		\end{vmatrix}$, $k < j$. Note that $\delta_{i_k,i_j} = 0$ for all $i_k, i_j$. 
	\end{proof}
	
	Note that if we naturally extend the definition of a linear dependence relation (\textit{Definition \ref{def:W}}) to non-block elements and replace the condition \textquotedblleft $W(w_1) = W(w_2)$\textquotedblright \ with \textquotedblleft $w_1$ and $w_2$ are linearly dependent\textquotedblright \ in \textit{Proposition \ref{propos:W_commut}}, then the resulting proposition will be valid. This is unimportant for the main purpose of our article, so to simplify the technical details, we will work with \textit{Definition \ref{def:W}} as it is presented.
	
	The following proposition establishes a one-to-one correspondence between the set of linearly dependent elements with a block element $w$ and the set of all elements $h$ commuting with $w$ such that $\alpha(h) \subseteq \alpha(w)$.
	
	\begin{prop}\label{propos:block_w}
		Let $w \in G_\Delta$ be a block element and $|\alpha(w)| > 1$. If $h \in G_\Delta$ is such that $[w, h] = e$ and $\alpha(h) \subseteq \alpha(w)$ then $h \in W(w)$.
	\end{prop}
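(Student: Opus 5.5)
Write $\overline{w} = x_{i_1}^{\beta_{i_1}} \cdots x_{i_r}^{\beta_{i_r}}$ with $r > 1$, all $\beta_{i_j} \ne 0$, and the vertices ordered along a path of $\Delta'(\alpha(w))$ through all its vertices. Since $\alpha(h) \subseteq \alpha(w)$, we may write $\overline{h} = x_{i_1}^{\gamma_{i_1}} \cdots x_{i_r}^{\gamma_{i_r}}$ with some $\gamma_{i_j}$ possibly zero. If all $\gamma_{i_j} = 0$, then $h \in G_\Delta' \subseteq W(w)$ trivially (take $k_1 = 0$, $k_2 = 1$). So the content of the statement is to produce $k_1, k_2$, not both zero, with $k_1 \beta_{i_j} = k_2 \gamma_{i_j}$ for all $j$.

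First apply \textit{Proposition \ref{propos:det}}: $[w,h] = \prod y_{i_k,i_j}^{\delta_{i_k,i_j}}$ with $\delta_{i_k,i_j} = \beta_{i_k}\gamma_{i_j} - \beta_{i_j}\gamma_{i_k}$, where the product runs only over pairs with $[x_{i_k}, x_{i_j}] \ne e$, i.e. over edges of $\Delta'(\alpha(w))$. By \textit{Proposition \ref{propos:baza}}(2) these $y$'s form a free basis of $G_\Delta'$, so $[w,h]=e$ forces $\beta_{i_k}\gamma_{i_j} = \beta_{i_j}\gamma_{i_k}$ for every pair $x_{i_k}, x_{i_j}$ adjacent in $\Delta'$. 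The determinant vanishes only along edges of the non-commutativity graph, not for all pairs, so the next step is to propagate along the connected graph.

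Take $k_1 = \gamma_{i_1}$ and $k_2 = \beta_{i_1} \ne 0$, so $k_2 \ne 0$ as required. We must show $\gamma_{i_1}\beta_{i_j} = \beta_{i_1}\gamma_{i_j}$ for every $j$. Since $\Delta'(\alpha(w))$ is connected, pick a path $x_{i_1} = v_0, v_1, \ldots, v_m = x_{i_j}$ in it, and argue by induction on $m$. Suppose $\gamma_{v_0}\beta_{v_s} = \beta_{v_0}\gamma_{v_s}$, and the edge relation gives $\beta_{v_s}\gamma_{v_{s+1}} = \beta_{v_{s+1}}\gamma_{v_s}$. Multiplying, we get
\[
\beta_{v_s}\,\beta_{v_0}\gamma_{v_{s+1}} = \beta_{v_{s+1}}\,\beta_{v_0}\gamma_{v_s} = \beta_{v_{s+1}}\,\gamma_{v_0}\beta_{v_s}.
\]
Cancelling $\beta_{v_s} \ne 0$ yields $\beta_{v_0}\gamma_{v_{s+1}} = \gamma_{v_0}\beta_{v_{s+1}}$. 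The cancellation is legitimate because $R$ is an integral domain.

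This chaining step is the only real obstacle. It is exactly where both the block property (connectedness of $\Delta'(\alpha(w))$) and the hypothesis that all $\beta$'s are nonzero are essential. We do not need to divide or find a root element, which is why the weak notion of linear dependence in \textit{Definition \ref{def:W}} is used rather than membership in $\langle w_0\rangle$. With the relation established for all $j$, $k_1 \cdot (\beta_{i_1}, \ldots, \beta_{i_r}) = k_2 \cdot (\gamma_{i_1}, \ldots, \gamma_{i_r})$ holds with $k_2 \ne 0$, hence $h \in W(w)$.
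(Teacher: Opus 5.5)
Your proof is correct and is essentially the paper's argument. As there, you get the vanishing of the $2\times 2$ minors along edges of $\Delta'(\alpha(w))$ from \textit{Proposition \ref{propos:det}} and \textit{Proposition \ref{propos:baza}}, choose $k_1=\gamma_{i_1}$, $k_2=\beta_{i_1}\neq 0$, and propagate by cancelling nonzero $\beta$'s in the integral domain $R$.

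The only difference is how you propagate. You follow an arbitrary path from $x_{i_1}$ to each vertex of the connected graph $\Delta'(\alpha(w))$, whereas the paper chains along consecutive edges of a single path through all vertices. Your version is slightly more robust, because a connected graph (e.g.\ a star $K_{1,3}$) need not have such a Hamiltonian path. Your initial assumption that the vertices are ordered along such a path is therefore never used and can be dropped.
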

	
	\begin{proof}
		If $h \in G_\Delta'$ then the statement is obvious. Let $h \not\in G_\Delta'$, and $\overline{w} = x_{i_1}^{\beta_{i_1}} \cdots x_{i_r}^{\beta_{i_r}}$. Since $w$ is a block element, there exists a path $P = x_{i_1} \ldots x_{i_r}$ in the graph $\Delta'(\alpha(w))$ that passes through all vertices of $\Delta'(\alpha(w))$. Let $\overline{h} = x_{i_1}^{\gamma_{i_1}} \cdots x_{i_r}^{\gamma_{i_r}}$. Since $[w, h] = e$ and $[x_{i_1}, x_{i_2}] \ne e$, by \textit{Proposition \ref{propos:det}}, $\beta_{i_1} \gamma_{i_2} - \beta_{i_2} \gamma_{i_1} = 0$. Therefore, $\beta_{i_1} \gamma_{i_2} = \beta_{i_2} \gamma_{i_1}$. Take the edge $(x_{i_2}, x_{i_3})$ of the path $P$. Since $[x_{i_2}, x_{i_3}] \ne e$, by \textit{Proposition \ref{propos:det}}, $\beta_{i_2} \gamma_{i_3} - \beta_{i_3} \gamma_{i_2} = 0$, from which it follows that $\beta_{i_2} \gamma_{i_3} = \beta_{i_3} \gamma_{i_2}$. Reasoning similarly for the remaining edges of $P$, we obtain that $\beta_{i_j} \gamma_{i_{j+1}} = \beta_{i_{j+1}} \gamma_{i_j}$ for all $j = \overline{1, r-1}$. Let us show that there exist $k_1, k_2 \in R$ such that $k_1 \beta_{i_j} = k_2 \gamma_{i_j}$. Let $k_1 = \gamma_{i_1}$ and $k_2 = \beta_{i_1}$. Then it is obvious that $\gamma_{i_1} \beta_{i_1} = \beta_{i_1} \gamma_{i_1}$ and $\beta_{i_2} \gamma_{i_1} = \beta_{i_1} \gamma_{i_2}$. Let us show that $\beta_{i_3} \gamma_{i_1} = \gamma_{i_3} \beta_{i_1}$. Indeed, $\beta_{i_2} (\beta_{i_3} \gamma_{i_1} - \gamma_{i_3} \beta_{i_1}) = \beta_{i_2} \beta_{i_3} \gamma_{i_1} - \beta_{i_1} \beta_{i_3} \gamma_{i_2} = \beta_{i_3} (\beta_{i_2} \gamma_{i_1} - \beta_{i_1} \gamma_{i_2}) = 0$. Proceeding in a similar manner, we obtain that $\beta_{i_j} \gamma_{i_1} = \gamma_{i_j} \beta_{i_1}$ for all $j = \overline{1, r}$. Thus, $h \in W(w)$ and $\alpha(h) = \alpha(w)$.
	\end{proof}
	
	Let $w \in G_\Delta$. Denote by $A(w) = \langle \alpha(w)^\perp \rangle$. It is obvious that $[w, w_0] = e$ for any $w_0 \in A(w)$, and, moreover, if $w$ is a block element, $[v, w_0] = e$ for any $v \in W(w)$ and for any $w_0 \in A(w)$. We also note that if $w$ is a block element and $|\alpha(w)| > 1$, then $W(w) \cap A(w) \subseteq G_\Delta'$.
	
	\begin{lem}\label{lemma:C_block_elem}
		Let $w \in G_\Delta$ be a block element. Then the set $W(w)$ is an Abelian subgroup of $G_\Delta$, and, for $|\alpha(w)| > 1$,
		\[C(w) = W(w) \cdot A(w).\]
	\end{lem}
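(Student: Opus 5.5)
We prove the two claims in turn, reducing everything to linear algebra over $R$ via \textit{Proposition \ref{propos:baza}} and \textit{Proposition \ref{propos:det}}.

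\emph{$W(w)$ is an Abelian subgroup.} Write $\overline{w} = x_{i_1}^{\beta_{i_1}} \cdots x_{i_r}^{\beta_{i_r}}$ and let $\beta = (\beta_{i_1}, \ldots, \beta_{i_r}) \neq 0$. Since $R$ is an integral domain, the condition defining $W(w)$ can be restated. If $k_1 \beta = k_2 \gamma$ with $(k_1,k_2) \ne (0,0)$, then $k_2 \neq 0$, because $\beta \ne 0$. So $u \in W(w)$ exactly when the image of $u$ in $\overline{G}_\Delta$ lies in
\[
L = \{\gamma \in R^r \ | \ \gamma \in \mathrm{Frac}(R)\cdot\beta\},
\]
viewed inside the span of $x_{i_1}, \ldots, x_{i_r}$. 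In other words, $W(w)$ is the full preimage in $G_\Delta$ of $L$. Since $L$ is an $R$-submodule of the free module $\overline{G}_\Delta$, its preimage is an $R$-subgroup containing $G_\Delta'$. For $|\alpha(w)|=1$ every element is a power of a single generator times $G_\Delta'$, so the subgroup is clearly Abelian. For $|\alpha(w)|>1$, take any $u,v \in W(w)$: their images are proportional over $\mathrm{Frac}(R)$, so all $2\times 2$ minors vanish and \textit{Proposition \ref{propos:det}} gives $[u,v]=e$. This is the same computation as in \textit{Proposition \ref{propos:W_commut}}, and it also covers elements of $G_\Delta'$.

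\emph{The centralizer, inclusion $\supseteq$.} Every element of $W(w)$ commutes with $w$ by the previous step. Every element of $A(w)$ is built from generators adjacent to all of $\alpha(w)$, so it commutes with $w$ by bilinearity of commutators in class two.

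\emph{The centralizer, inclusion $\subseteq$.} Let $h \in C(w)$ and write $\overline{h} = x_1^{\gamma_1}\cdots x_n^{\gamma_n}$. Split $h = h_1 h_2 h_3 z$, where:
\begin{itemize}
\item $h_1$ collects the generators in $\alpha(w)$;
\item $h_2$ collects the generators in $\alpha(w)^\perp$, which is disjoint from $\alpha(w)$ because $w$ is a block element with $|\alpha(w)|>1$;
\item $h_3$ collects the remaining generators;
\item $z \in G_\Delta'$.
\end{itemize}
Using \textit{Proposition \ref{propos:det}} together with $[w,h_2]=e$, we get
\[
[w,h] = [w,h_1][w,h_3] = e.
\]
By the uniqueness in \textit{Proposition \ref{propos:baza}}, the basis elements $y_{i,j}$ that occur in $[w,h_1]$ have both indices in $\alpha(w)$. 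Those occurring in $[w,h_3]$ have one index in $\alpha(w)$ and the other outside it. These two sets are disjoint, so $[w,h_1]=e$ and $[w,h_3]=e$ separately.

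\emph{Main obstacle: showing $h_3$ is trivial.} Take a generator $x_j$ occurring in $h_3$ with $\gamma_j \ne 0$. Since $x_j \notin \alpha(w)^\perp$, there is some $x_{i_s} \in \alpha(w)$ with $[x_{i_s},x_j]\ne e$. The coefficient of $y_{i_s,j}$ in $[w,h_3]$ is $\pm\beta_{i_s}\gamma_j$, because $x_j$ appears only in $h_3$ and $x_{i_s}$ only in $w$. This coefficient is nonzero since $R$ is a domain, which contradicts $[w,h_3]=e$. Hence $\overline{h_3}=e$.

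\emph{Conclusion.} Now $[w,h_1]=e$ and $\alpha(h_1)\subseteq\alpha(w)$, so \textit{Proposition \ref{propos:block_w}} gives $h_1 \in W(w)$. Therefore
\[
h = (h_1 z)\, h_2 \in W(w)\cdot A(w),
\]
using that $W(w)$ absorbs $G_\Delta'$. The only delicate point is the separation of $[w,h_1]$ from $[w,h_3]$, which depends on the precise basis statement in \textit{Proposition \ref{propos:baza}}.
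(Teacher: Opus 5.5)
Your proof is correct. For the equality $C(w)=W(w)\cdot A(w)$ it matches the paper's argument. A generator $x_j$ of $h$ outside $\alpha(w)\cup\alpha(w)^\perp$ produces the nonzero coefficient $\pm\beta_{i_s}\gamma_j$ of $[x_{i_s},x_j]$. The remainder then splits into a factor in $A(w)$ and a factor supported on $\alpha(w)$, which \textit{Proposition \ref{propos:block_w}} places in $W(w)$. Your preliminary separation of $[w,h_1]$ from $[w,h_3]$ is harmless but unnecessary: since $\beta_j=0$, the coefficient of $[x_{i_s},x_j]$ in $[w,h]$ itself is already $\pm\beta_{i_s}\gamma_j$, and the paper reads it off there.

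The genuine difference is in the subgroup claim. The paper gets closure under products by observing $[w,uv]=e$ and $\alpha(uv)\subseteq\alpha(w)$ and invoking \textit{Proposition \ref{propos:block_w}*}; it gets commutativity from \textit{Proposition \ref{propos:W_commut}}. You instead identify $W(w)$ with the full preimage of the $R$-submodule $\{\gamma\in R^r \mid \gamma\in\mathrm{Frac}(R)\beta\}$ of $\overline{G}_\Delta$, and check directly that the $2\times 2$ minors vanish.

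Your version gives closure under $R$-powers for free and handles $|\alpha(w)|=1$ uniformly. Both propositions the paper cites are stated only for $|\alpha(w)|>1$, though that case is trivial. The paper's version stays inside $R$ and reuses a proposition it needs for the centralizer anyway.
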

	
	\begin{proof}
		Let $\overline{w} = x_{i_1}^{\beta_{i_1}} \cdots x_{i_r}^{\beta_{i_r}}$. If $u, v \in W(w)$, then $[w, u v] = [\overline{w}, u v] = [\overline{w}, \overline{u}] [\overline{w}, \overline{v}] = e$. On the other hand, $\alpha(u v) \subseteq \alpha(w)$. Then, by \textit{Proposition \ref{propos:block_w}}, $u v \in W(w)$. As noted above, $e = x_{i_1}^0 \cdots x_{i_r}^0 \in W(w)$. Furthermore, if $\tilde{w} \in W(w)$, then $\tilde{w}^{-1} \in W(w)$. Thus, $W(w)$ is a subgroup of $G_\Delta$. The Abelian nature of $W(w)$ follows from \textit{Proposition \ref{propos:W_commut}}.
		
		Let $h \in G_\Delta$ and $[h, w] = e$. We show that $h \in W(w) \cdot A(w)$. Put $T = \alpha(w) \cup \alpha(w)^\perp$. Take $x_s \in \alpha(h)$ and let $x_s \not\in T$. Then there exists $x_{i_q} \in \alpha(w)$ such that $[x_{i_q}, x_s] \ne e$. Then, by \textit{Proposition \ref{propos:det}}, we have that
		\[[x_s, w] = [x_s, \overline{w}] = \left[x_s, x_{i_1}^{\beta_{i_1}} \cdots x_{i_r}^{\beta_{i_r}}\right] = [x_s, x_{i_q}]^{\beta_{i,q}} \prod_{j \in \{1, \ldots, r\} \setminus \{q\}} [x_s, x_{i_j}]^{\beta_{i_j}},\]
		where $\beta_{i,q} \ne 0$. Consequently, $[x_s, w] \ne e$, and, by \textit{Proposition \ref{propos:det}}, the coefficient of $[x_s, x_{iq}]$ in $[h, w]$ is $\gamma_s \beta_{i_q} \ne 0$. Thus, $\alpha(h) \subseteq T$.
		
		Since $w$ is a block element, $T = \alpha(w) \sqcup \alpha(w)^\perp$. We write the representative of $h$ as $\overline{h} = h_1 h_2$, where $\alpha(h_1) \subseteq \alpha(w)$ and $\alpha(h_2) \subseteq \alpha(w)^\perp$. Thus, $h_2 \in A(w)$. Note that $[w, h] = e$ if and only if $[w, h_1] = e$. Then, by \textit{Proposition \ref{propos:block_w}}, $h_1 \in W(w)$.
		
		Conversely, let $h \in W(w) \cdot A(w)$. Then $\overline{h} = \overline{v} \cdot \overline{w_0}$, where $v \in W(w)$, $w_0 \in A(w)$. By \textit{Proposition \ref{propos:W_commut}}, $[h, w] = [\overline{h}, \overline{w}] = [\overline{v} \cdot \overline{w_0}, \overline{w}] = e$.
	\end{proof}
	
	\begin{rem}
		As shown in \cite[Lemma 2]{RemeslTreier}, if $R$ is a field of characteristic zero, then
		\[C(w) = (\langle \overline{w} \rangle \times A(w)) \cdot G_\Delta'.\]
		Indeed, in this case, for any $k_1 \in R \setminus \{0\}$ and $k_2 \in R$, the equality $k_1 \beta_{i_j} = k_2 \gamma_{i_j}$ can be rewritten as $\beta_{i_j} = k_1^{-1} k_2 \gamma_{i_j}$, which actually means $W(w) = \langle \overline{w} \rangle \cdot G_\Delta'$. 
	\end{rem}
	
	Note that using \textit{Lemma \ref{lemma:C_block_elem}} we can describe the centralizer of an arbitrary element of the group $G_\Delta$, not just a block element. However, for the main result of the current work, given in the next section, we will only need \textit{Lemma \ref{lemma:C_block_elem}}.
	
	\section{Centroid of the group $G_\Delta^R$}\label{section:main}
	
	Let $R$ be a binomial domain, and $G_\Delta$ be a finitely generated partially commutative two-step nilpotent Hall $R$-group such that
	\begin{itemize}
		\item $X = \{x_1, \ldots, x_n\}$ -- the set of vertices of the graph $\Delta$,
		\item $\Delta_1, \ldots, \Delta_k$ -- all connected components with more than one vertex in the non-commutativity graph $\Delta'$ of the graph $\Delta$,
		\item $Y = \{y_1, \ldots, y_l\} = X \cap Z(G_\Delta)$ -- central vertices in the graph $\Delta$.
	\end{itemize}
	Note that $y_i \not\in \Delta_j$ for all $i$ and $j$. From \textit{Lemma \ref{propos:baza}} and the theory of block decompositions \cite{13}, it follows that any element $g \in G_\Delta$ has the form 
	\[g = w_1 \cdots w_k \cdot y_1^{\gamma_1} \cdots y_l^{\gamma_l} \cdot \prod\limits_{i = 1}^{k} \ \prod\limits_{\substack{x_s, x_r \in \Delta_i, \\ [x_s, x_r] \ne e, \\ s > r}} [x_s, x_r]^{\gamma_{s, r}},\] 
	where $w_1 \cdots w_k$ is the decomposition of $g$ into block projections and $\gamma_i, \gamma_{s,r} \in R$, and this representation is unique up modulo $G_\Delta'$ to a permutation of factors, including factors in block decompositions of block projections.
	
	In this section, we calculate the centroid of $G_\Delta$ for the case of $l = 0$. Before doing so, we will assume that $l$ is arbitrary until a certain point is explicitly noted.
	
	The following lemma gives a description of the images of generators of the group $G_\Delta$ with respect to arbitrary $\varphi \in \Gamma(G_\Delta)$.
	
	\begin{lem}\label{lemma:gen}
		For any $\varphi \in \Gamma(G_\Delta)$, there exist $\alpha_{\varphi, 1}, \ldots, \alpha_{\varphi, k} \in R$ such that for any $x_i \in X$ the following conditions hold:
		\begin{itemize}
			\item if $x_i \not\in Y$ and $x_i \in \Delta_j$, then $x_i^\varphi = x_i^{\alpha_{\varphi, j}} z_i$, where $z_i \in Z(G_\Delta)$;
			\item if $x_i \in Y$, then $x_i^\varphi \in Z(G_\Delta)$.
		\end{itemize}
	\end{lem}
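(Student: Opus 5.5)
The central case is easy; the substance is the first bullet, which I will prove in two stages. First I show $x_i^\varphi = x_i^{\alpha_i} z_i$ for each non-central generator. Then I show the exponent $\alpha_i$ is constant along edges of $\Delta'$, hence on each component $\Delta_j$.

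\emph{Central generators.} If $x_i \in Y$, then $C(x_i) = G_\Delta$. By \textit{Lemma \ref{lemma:C_invariant}}, $C(G_\Delta)^\varphi \subseteq C(G_\Delta)$. Since $x_i \in Z(G_\Delta) = C(G_\Delta)$, we get $x_i^\varphi \in Z(G_\Delta)$. (Alternatively, for every $g$ we have $[x_i,g]=e$, so \textit{Lemma \ref{lemma:N_G_commut}} gives $[x_i^\varphi, g]=e$.)

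\emph{Shape of $x_i^\varphi$ for $x_i \in \Delta_j$.} Since $x_i$ is non-central, it has a neighbour $x_s$ in $\Delta'$. Consider $u = x_i^\varphi$.
\begin{itemize}
\item By \textit{Lemma \ref{lemma:C_invariant}}, $u \in C(x_i) = \langle x_i^\perp\rangle \cdot G_\Delta'$ (\textit{Remark \ref{remark:C_gen}}). Hence $\overline{u} = x_i^{\alpha_i} v$ with $\alpha(v) \subseteq x_i^\perp \setminus\{x_i\}$.
\item For each $x_t \in x_i^\perp\setminus\{x_i\}$ I must show that $x_t$ does not occur in $\overline{u}$. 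Since $x_t$ is not central, there is $x_m$ with $[x_t,x_m]\neq e$. If $x_m$ commutes with $x_i$, then $u \in C(x_m)$ by \textit{Lemma \ref{lemma:N_G_commut}}, which forces the exponent of $x_t$ in $\overline{u}$ to vanish by \textit{Proposition \ref{propos:det}}.
\item If every such $x_m$ fails to commute with $x_i$, I will instead use \textit{Lemma \ref{lemma:comms}}: $[x_i^\varphi, x_m] = [x_i, x_m^\varphi] = [x_i,x_m]^\varphi$. The middle term has, by \textit{Proposition \ref{propos:det}}, only $y_{i,\cdot}$-components. The left-hand side then has a $[x_t,x_m]$-component with coefficient equal to the exponent of $x_t$ in $\overline{u}$, so that exponent is $0$, using the basis of \textit{Proposition \ref{propos:baza}}.
\end{itemize}
This last step uses exactly that $x_t$ is not central. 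It is the main obstacle: one must be careful with the bookkeeping of basis commutators, and check the subcase $x_m = x_i$, which cannot occur since $x_t \in x_i^\perp$. The conclusion is $x_i^\varphi = x_i^{\alpha_i} z_i$ with $z_i \in G_\Delta' \subseteq Z(G_\Delta)$.

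\emph{Constancy on components.} Take an edge $x_i x_s$ of $\Delta'$, so $[x_i,x_s]\neq e$. By \textit{Lemma \ref{lemma:comms}}, $[x_i^\varphi, x_s] = [x_i, x_s^\varphi]$, that is, $[x_i^{\alpha_i}, x_s] = [x_i, x_s^{\alpha_s}]$. This gives $[x_i,x_s]^{\alpha_i} = [x_i,x_s]^{\alpha_s}$, so $[x_i,x_s]^{\alpha_i-\alpha_s}=e$. By $R$-torsion-freeness (\textit{Remark \ref{remark:torsionFree}}), $\alpha_i = \alpha_s$. Connectivity of $\Delta_j$ then yields a common value $\alpha_{\varphi,j}$, which completes the first bullet.
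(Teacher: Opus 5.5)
\paragraph{Overall approach.} Your route is essentially the paper's. You use $x_i^\varphi\in C(x_i)=\langle x_i^\perp\rangle\cdot G_\Delta'$ via Lemma~\ref{lemma:C_invariant} and Remark~\ref{remark:C_gen}. You then use Lemma~\ref{lemma:comms} together with the basis of Proposition~\ref{propos:baza} to remove the extra generators and to equate exponents along edges of $\Delta'$. The paper obtains both conclusions from the single identity $[x_i^\varphi,x_q]=[x_i,x_q^\varphi]$, by splitting it along disjoint sets of basic commutators. Your Case~A is redundant: the Case~B computation works for every $x_m$ with $[x_t,x_m]\neq e$, since $x_m\neq x_i$ automatically. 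Your handling of $x_i\in Y$ via $C(G_\Delta)=Z(G_\Delta)$ is correct and clean.

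\paragraph{The gap: central vertices in $x_i^\perp$.} You claim that \emph{every} $x_t\in x_i^\perp\setminus\{x_i\}$ is absent from $\overline{u}$, justified by ``since $x_t$ is not central'', and conclude $z_i\in G_\Delta'$. But $x_i^\perp$ contains all central vertices $Y$. For those vertices the claim is false, and so is the conclusion $z_i\in G_\Delta'$.

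Here is a counterexample. Take $y_1\in Y$, let $\epsilon_i(g)\in R$ be the $x_i$-coordinate of $g$ modulo $G_\Delta'$, and put $\psi(g)=y_1^{\epsilon_i(g)}$. Then $\psi$ is a homomorphism into the centre, hence normal and a quasi-endomorphism. So $\varphi=1+\psi\in\Gamma(G_\Delta)$, and $x_i^\varphi=x_iy_1\notin x_i^{\alpha}G_\Delta'$ for any $\alpha\in R$.

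The fix is to run your Lemma~\ref{lemma:comms} argument only for non-central $x_t$, where the required $x_m$ exists. The surviving central generators are then absorbed into $z_i\in\langle Y\rangle\cdot G_\Delta'\subseteq Z(G_\Delta)$. This is exactly why the lemma states $z_i\in Z(G_\Delta)$ rather than $z_i\in G_\Delta'$, and it is how the paper concludes, with only $t_{i,r}\in Z(G_\Delta)\cap X$ surviving. Your stronger claim does hold when $l=0$, but the lemma is stated for arbitrary $l$. Your constancy-on-components step is unaffected, since $z_i$ is central in either case.
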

	
	\begin{proof}
		Consider the action $\varphi \in \Gamma(G_\Delta)$ on an arbitrary generator $x_i$ of the group $G_\Delta$. By means of \textit{Lemma \ref{lemma:C_invariant}} and \textit{Remark \ref{remark:C_gen}}, the image of $x_i$ with respect to $\varphi$ in the general case has the form 
		\[x_i^\varphi = x_i^{\alpha_{\varphi,i}} t_{i,1}^{\beta_{\varphi,i,1}} \ldots t_{i,n_i}^{\beta_{\varphi,i,n_i}} \ \text{modulo $G_\Delta'$},\]
		where $\alpha_{\varphi,i}, \beta_{\varphi,i,q} \in R$, and $t_{i, q} \in x_i^\perp \setminus \{x_i\}$.
		
		Let $x_q \in X \setminus \{x_i\}$ be chosen arbitrarily. Since the group $G_\Delta$ is two-step nilpotent, we have $[x_i, x_q] \in Z(G_\Delta)$. Then, by \textit{Lemma \ref{lemma:comms}}, $\left[x_i^\varphi, x_q\right] = \left[x_i, x_q^\varphi\right]$. From this equality we write out the first and second commutators:
		\[\left[x_i^\varphi, x_q\right] = \left[x_i^{\alpha_{\varphi,i}} t_{i,1}^{\beta_{\varphi,i,1}} \cdots t_{i,n_i}^{\beta_{\varphi,i,n_i}}, x_q\right] = [x_i, x_q]^{\alpha_{\varphi,i}} \left[t_{i,1}^{\beta_{\varphi,i,1}} \cdots t_{i,n_i}^{\beta_{\varphi,i,n_i}}, x_q\right],\]
		\[\left[x_i, x_q^\varphi\right] = \left[x_i, x_q^{\alpha_{\varphi,q}} t_{q,1}^{\beta_{\varphi,q,1}} \cdots t_{q,n_q}^{\beta_{\varphi,q,n_q}} z_q\right] = [x_i, x_q]^{\alpha_{\varphi,q}} \left[x_i, t_{q,1}^{\beta_{\varphi,q,1}} \cdots y_{t,n_q}^{\beta_{\varphi,q,n_q}}\right].\]
		Thus,
		\[[x_i, x_q]^{\alpha_{\varphi,i}} \left[t_{i,1}^{\beta_{\varphi,i,1}} \cdots t_{i,n_i}^{\beta_{\varphi,i,n_i}}, x_q\right] = [x_i, x_q]^{\alpha_{\varphi,q}} \left[x_i, t_{q,1}^{\beta_{\varphi,q,1}} \cdots t_{q,n_q}^{\beta_{\varphi,q,n_q}}\right].\]
		Since the group $G_\Delta$ is partially commutative, by \textit{Proposition \ref{propos:baza}},
		\begin{equation}\label{eq:gen_1}
			[x_i, x_q]^{\alpha_{\varphi,i}} = [x_i, x_q]^{\alpha_{\varphi,q}},
		\end{equation}
		\begin{equation}\label{eq:gen_2}
			\left[t_{i,1}^{\beta_{\varphi,i,1}} \cdots t_{i,n_i}^{\beta_{\varphi,i,n_i}}, x_q\right] = \left[x_i, t_{q,1}^{\beta_{\varphi,q,1}} \cdots t_{q,n_q}^{\beta_{\varphi,q,n_q}}\right].
		\end{equation}
		If $[x_i, x_q] \ne e$, then, from equality (\ref{eq:gen_1}), we obtain that $\alpha_{\varphi,i} = \alpha_{\varphi,q}$. From this we note that if in the non-commutativity graph $\Delta'$ there is a path from vertex $x_i$ to vertex $x_r$, then $\alpha_{\varphi,i} = \alpha_{\varphi,r}$.
		Since in the representation $t_{i,1}^{\beta_{\varphi,i,1}} \cdots t_{i,n_i}^{\beta_{\varphi,i,n_i}}$ there is no generator $x_i$, and in $t_{q,1}^{\beta_{\varphi,q,1}} \cdots t_{q,n_q}^{\beta_{\varphi,q,n_q}}$ there is no generator $x_q$, then, from the equality (\ref{eq:gen_2}), we obtain
		$\left[t_{i,1}^{\beta_{\varphi,i,1}} \cdots t_{i,n_i}^{\beta_{\varphi,i,n_i}}, x_q\right] = \left[x_i, t_{q,1}^{\beta_{\varphi,q,1}} \cdots t_{q,n_q}^{\beta_{\varphi,q,n_q}}\right] = e.$
		Hence,
		$\beta_{\varphi,i,r} = 0$ for all $r$ such that $[t_{i, r}, x_q] \ne e$.
		Thus, in the representation for $x_i^\varphi$, only those $t_{i, r}$ can have nontrivial exponents for which the equality $[t_{i, r}, x_q] = e$ holds for all $x_q \in X \setminus \{x_i\}$, that is, $t_{i, r} \in Z(G_\Delta) \cap X$.
	\end{proof}
	
	Note that if $\Delta$ has no edges, then $x_i^\varphi = x_i^{\alpha_{\varphi,1}}$ modulo $G_\Delta'$ for all $i$. In particular, this follows from \cite[Lemma 5.2]{LioutMyas}.
	
	\begin{rem}\label{remark:gen_for_cyclic}
		Using \textit{Remark \ref{remark:C_gen}} and repeating the arguments of the proof of \textit{Lemma \ref{lemma:gen}}, one can easily find out the image of any element of the cyclic $R$-subgroup $\langle x_i \rangle$, where $x_i \in X$, with respect to each element of $\Gamma(G_\Delta)$. Thus, for any $x_i \in X$, $r \in R$, $\varphi \in \Gamma(G_\Delta)$,
		\[(x_i^r)^\varphi = (x_i^\varphi)^r \text{ modulo $Z(G_\Delta)$}.\]
		From this, if $x_i \not\in Z(G_\Delta)$ and $x_i \in \Delta_j$, then $(x_i^r)^\varphi = x_i^{r \cdot \alpha_{\varphi, j}}$ modulo $Z(G_\Delta)$. Note that in this case, the exponents at the central vertices in $(x_i^r)^\varphi$ may, until proven otherwise, not be related to the exponents in $x_i^\varphi$. 
	\end{rem}
	
	Let $\langle y_1 \rangle, \ldots, \langle y_l \rangle$ are cyclic $R$-subgroups generated by the central generators of $G_\Delta$ (the central vertices of the graph $\Delta$), $R^l$ is a set of all tuples of length $l$ with elements from $R$. Let's denote $Y_{cycl} = \langle y_1 \rangle \cup \ldots \cup \langle y_l \rangle$. Let us prove the following lemma, which describes the image of any element of the group $G_\Delta$ with respect to an arbitrary $\varphi \in \Gamma(G_\Delta)$.
	
	\begin{lem}\label{lemma:elems}
		For any $\varphi \in \Gamma(G_\Delta)$, there exist $\alpha_{\varphi,1}, \ldots, \alpha_{\varphi,k} \in R$ and a map $B_{\varphi, Z} : Y_{cycl} \rightarrow R^l$ such that
		\begin{enumerate}
			\item if $g \in G_\Delta$ and $g = w_{1} \cdots w_{k}$ is the decomposition of $g$ into block projections modulo $Z(G_\Delta)$, then $g^\varphi = w_{1}^{\alpha_{\varphi, 1}} \cdots w_{k}^{\alpha_{\varphi, k}}$ modulo $Z(G_\Delta)$, that is, the exponent of $w_i$ is determined by which of the connected components $\Delta_1, \ldots, \Delta_k$ the $\alpha(w_i)$ is included in;
			\item if $g \in Z(G_\Delta)$ and $g = y_1^{\gamma_{1}} \cdots y_l^{\gamma_{l}}$ modulo $G_\Delta'$, then $g^\varphi = y_1^{\beta_{\varphi,1}} \cdots y_l^{\beta_{\varphi,l}}$ modulo $G_\Delta'$, where the tuple $(\beta_{\varphi,1}, \ldots, \beta_{\varphi,l})$ is a sum of the tuples $B_{\varphi, Z}(y_1), \ldots, B_{\varphi, Z}(y_l)$, i.\,e., if $B_{\varphi, Z}(y_i^{\gamma_i}) = (\beta_{\varphi, y_i^{\gamma_i}, 1}, \ldots, \beta_{\varphi, y_i^{\gamma_i}, l})$, then $\beta_{\varphi,i} = \sum_{j = 1}^l \beta_{\varphi, y_j^{\gamma_j}, i}$ for all $i = \overline{1, l}$, that is, $\left(Z(G_\Delta)\right)^\varphi$ modulo $G_\Delta'$  is determined by $\left(Y_{cycl}\right)^\varphi$;
			\item if $g \in G_\Delta'$ and $g = u_1 \cdots u_k$, where $u_i = \prod_{x_s, x_r \in \Delta_i} [x_s, x_r]^{\gamma_{s,r}}$ and $\gamma_{s,r} \in R$, then $g^\varphi = u_1^{\alpha_{\varphi,1}} u_2^{\alpha_{\varphi,2}} \cdots u_k^{\alpha_{\varphi,k}}$, that is, the exponent of the basic commutator $[x_s, x_r]$ is determined by which of the connected components $\Delta_1, \ldots, \Delta_k$ contains $x_s$ and $x_r$.
		\end{enumerate}
	\end{lem}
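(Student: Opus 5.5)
The $\alpha_{\varphi,j}$ will be the scalars produced by Lemma \ref{lemma:gen}. Each part will then be obtained from these scalars using axiom (4) and Lemma \ref{lemma:comms}. I prove part (3) first, since it is the cleanest and is needed for the others.

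\emph{Part (3).} Let $x_s, x_r \in \Delta_i$ with $[x_s,x_r] \ne e$. Lemma \ref{lemma:comms} gives $[x_s,x_r]^\varphi = [x_s^\varphi, x_r]$. By Lemma \ref{lemma:gen}, $x_s^\varphi = x_s^{\alpha_{\varphi,i}} z_s$ with $z_s$ central, so $[x_s,x_r]^\varphi = [x_s,x_r]^{\alpha_{\varphi,i}}$.

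Now let $[x_s,x_r]$ be a commutator with $x_s \in \Delta_i$ and $x_r \in \Delta_j$, $i\ne j$. Vertices in different components of $\Delta'$ are adjacent in $\Delta$, so this commutator is trivial. Commutators involving central vertices are trivial as well. Hence the basis of $G_\Delta'$ from Proposition \ref{propos:baza} consists only of commutators inside the components $\Delta_i$.

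All elements of $G_\Delta'$ commute, since $G_\Delta' \subseteq Z(G_\Delta)$. So axiom (4) lets $\varphi$ distribute over the product $u_1\cdots u_k$ and over the factors of each $u_i$. For a single factor, $([x_s,x_r]^{\gamma})^\varphi = [x_s^{\gamma},x_r]^\varphi = [x_s^\gamma,x_r^\varphi] = [x_s,x_r]^{\gamma\alpha_{\varphi,i}}$, again by Lemma \ref{lemma:comms} and Remark \ref{remark:comms_cons}. This proves (3).

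\emph{Part (1).} Write $g = w_1\cdots w_k\, c$ with $c \in Z(G_\Delta)$. For $i\ne j$ we have $[w_i,w_j]=e$, and $c$ commutes with everything. Axiom (4) therefore gives $g^\varphi = w_1^\varphi\cdots w_k^\varphi c^\varphi$, and $c^\varphi$ is central by Lemma \ref{lemma:C_invariant} applied to $X = G_\Delta$. It remains to show $w_i^\varphi \equiv w_i^{\alpha_{\varphi,i}}$ modulo $Z(G_\Delta)$.

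To determine $w_i^\varphi$ modulo the center it suffices to know $[w_i^\varphi, x_q]$ for every $x_q \in X$. Indeed, an element $h$ with $[h,x_q]=[h',x_q]$ for all $q$ satisfies $h^{-1}h' \in C(X) = Z(G_\Delta)$. Since $G_\Delta$ is two-step nilpotent, $[w_i,x_q]$ is central, so Lemma \ref{lemma:comms} gives $[w_i^\varphi,x_q] = [w_i,x_q^\varphi]$.

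By Lemma \ref{lemma:gen}, $x_q^\varphi$ is either central or equal to $x_q^{\alpha_{\varphi,j}}z_q$ with $x_q \in \Delta_j$. In the first case $[w_i,x_q^\varphi]=e=[w_i^{\alpha_{\varphi,i}},x_q]$. In the second case, if $j \ne i$ then $x_q$ commutes with $\alpha(w_i)\subseteq\Delta_i$, and both sides are trivial. If $j=i$, then
\[
[w_i,x_q^{\alpha_{\varphi,i}}] = [w_i,x_q]^{\alpha_{\varphi,i}} = [w_i^{\alpha_{\varphi,i}},x_q],
\]
using the bilinearity in Proposition \ref{propos:det}. Comparing with $w_i^{\alpha_{\varphi,i}}$ gives $w_i^\varphi \equiv w_i^{\alpha_{\varphi,i}}$ modulo $Z(G_\Delta)$, which proves (1).

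\emph{Part (2).} Define $B_{\varphi,Z}(y_i^\gamma)$ to be the exponent tuple of $(y_i^\gamma)^\varphi$ modulo $G_\Delta'$. This element is central, and central elements have no non-central generators modulo $G_\Delta'$, so the tuple lies in $R^l$. An element $g \in Z(G_\Delta)$ can be written as $g = y_1^{\gamma_1}\cdots y_l^{\gamma_l} z$ with $z\in G_\Delta'$. All these factors are central, so axiom (4) splits $g^\varphi$ into $\prod (y_i^{\gamma_i})^\varphi\cdot z^\varphi$. Here $z^\varphi \in G_\Delta'$ by part (3), and modulo $G_\Delta'$ the exponent tuples add.

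One point needs care: $g \in Z(G_\Delta)$ really has this form. By Proposition \ref{propos:baza}, a generator $x_s \in \Delta_i$ occurring with nonzero exponent would yield a nontrivial commutator with a neighbour of $x_s$ in $\Delta'$, contradicting centrality.

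The main obstacle is part (1), namely making the claim that $\varphi$ is determined modulo the center by its commutators with generators rigorous without assuming that $\varphi$ is additive on non-commuting elements. The centralizer argument above avoids this, because it uses only normality, through Lemma \ref{lemma:comms}, applied to the element $w_i$ itself.
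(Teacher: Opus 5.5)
Your proof is correct. Parts (2) and (3) follow essentially the paper's computation, and you are slightly more explicit there: you justify $z^\varphi\in G_\Delta'$ in (2) via part (3), and you compute $([x_s,x_r]^{\gamma})^\varphi$ directly in (3). For part (1), however, you take a genuinely different and shorter route.

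The paper proves (1) as follows.
\begin{enumerate}
\item It splits each block projection into its block decomposition and treats a single block element $w$.
\item For $|\alpha(w)|=1$ it appeals to Remark~\ref{remark:gen_for_cyclic}.
\item For $|\alpha(w)|>1$ it uses Lemma~\ref{lemma:C_invariant} and the centralizer description $C(w)=W(w)\cdot A(w)$ from Lemma~\ref{lemma:C_block_elem}. This gives $w^\varphi=\tilde w\prod t_s^{\gamma_{\varphi,s}}$ modulo the center.
\item It kills the $A(w)$-part by commuting with suitably chosen generators $h_s$.
\item It pins down $\tilde w\in W(w)$ through a linear-dependence relation $(w^\varphi)^\zeta=w^\xi$ and uniqueness of $R$-roots in $G_\Delta/Z(G_\Delta)$.
\end{enumerate}

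You instead use the two-step nilpotent fact that an element is determined modulo $Z(G_\Delta)=C(X)$ by its commutators with the generators. You then compute those commutators for $w_i^\varphi$ directly, via $[w_i^\varphi,x_q]=[w_i,x_q^\varphi]$ and Lemma~\ref{lemma:gen}. This treats arbitrary block projections uniformly, with no reduction to block elements, no case split on $|\alpha(w)|$, and no root extraction. It also shows that Section~\ref{section:C_block}, in particular Lemma~\ref{lemma:C_block_elem}, is not actually needed for Theorem~\ref{theorem:main}. What the paper's route buys in exchange is the explicit structure of centralizers of block elements, which is of independent interest but not required here.

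Two small remarks.
\begin{enumerate}
\item The equality $C(X)=Z(G_\Delta)$ deserves a one-line justification. You can cite Remark~\ref{remark:C_gen}, or note that $C(u)$ is an $R$-subgroup by Lemma~\ref{lemma:C_invariant} and that $X$ $R$-generates $G_\Delta$.
\item Your closing comment that only normality is used is slightly off. Lemma~\ref{lemma:comms} relies on the quasi-endomorphism property as well as normality, through $h^{-1}gh=g[g,h]$ with commuting factors. This does not affect your argument.
\end{enumerate}
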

	
	\begin{proof}
		\textbf{(1)} If $k = 0$ or $g \in Z(G_\Delta)$, then this statement is obviously true. We will further assume that $k > 0$ and $g \not\in Z(G_\Delta)$. Let $g = w_1 w_2 \cdots w_k$ modulo $Z(G_\Delta)$. Since $[w_i, w_j] = e$ for all $i,j = \overline{1, k}$, we have $g^\varphi = w_{1}^\varphi \cdots w_{k}^\varphi$ modulo $Z(G_\Delta)$. Next, we'll look at what each $w_i^\varphi$ looks like. Each block projection $w_i$ can be represented by a block decomposition $q_{i, 1} \cdots q_{i, n_i}$. Similarly, $w_i^\varphi = q_{i, 1}^\varphi \cdots q_{i, n_i}^\varphi$ modulo $Z(G_\Delta)$. To simplify the recording of further reasoning, without loss of generality we will assume that $w_i = w$, $w_j = e$ for all $j \ne i$, $n_i = 1$, and $q_{i, 1} = w$.
		If $|\alpha(w)| = 1$, then, using \textit{Remark \ref{remark:gen_for_cyclic}}, we get what $w^{\varphi}$ looks like modulo $Z(G_\Delta)$. Further we will assume that $|\alpha(w)| > 1$, $T = \alpha(w)^\perp \setminus Z(G_\Delta)$, and $\alpha(w) \subseteq \Delta_j$ for some $j$. Let us recall that $T \cap \alpha(w) = \emptyset$ and $T \supseteq \bigsqcup_{s \ne j} \Delta_s$. Note that two cases are possible: if $\alpha(w) = \Delta_j$, then $T = \bigsqcup_{s \ne j} \Delta_s$, and there may be $T \cap \Delta_j \ne \emptyset$ otherwise. We will consider both of these cases simultaneously. By \textit{Lemma \ref{lemma:C_invariant}} and \textit{Lemma \ref{lemma:C_block_elem}}, 
		\[w^\varphi = \tilde{w} \prod_{t_{s} \in T} t_{s}^{\gamma_{\varphi, s}} \ \ \text{modulo $Z(G_\Delta)$,}\]
		where $\tilde{w} \in W(w)$ and $\gamma_{\varphi, s} \in R$. In the case of a non-empty set $T$, we show that $\gamma_{\varphi, s} = 0$ for any $t_s \in T$. Let us recall that we assumed that $\alpha(w) \subseteq \Delta_j$. Since each $t_s \not\in Z(G_\Delta)$, there exists a generator $h_s \in X \setminus Y$ such that $[h_s, t_s] \ne e$. Let $h_s \in \Delta_r$ and possibly $r = j$.
		Then 
		\[[h_s, w^\varphi] = \left[h_s, \tilde{w} \prod_{t_{p} \in T \cap \Delta_r} t_{p}^{\gamma_{\varphi, p}}\right],\]
		and, on the other side, using \textit{Lemma \ref{lemma:comms}} and \textit{Lemma \ref{lemma:gen}}, $[h_s, w^\varphi] = [h_s^\varphi, w] = [h_s^{\alpha_{\varphi, j}}, w] = [h_s, w]^{\alpha_{\varphi, j}}$. Having described the commutators $[h_s, \tilde{w} \prod_{t_{p} \in T \cap \Delta_r} t_{p}^{\gamma_{\varphi, p}}]$ and $[h_s, w]^{\alpha_{\varphi, j}}$ through the basic commutators, we will come to the conclusion that $\gamma_{\varphi,p} = 0$ for all $p$ such that $t_{p} \in \Delta_r$ and $[h_s, t_p] \ne e$. Note that $t_s$, according to which $h_s$ was chosen, is also present among such $t_p$. Since $t_s \in T$ can be considered arbitrary, we come to the conclusion that
		$w^\varphi = \tilde{w}$ modulo $Z(G_\Delta)$. 
		
		Let us now show that $\tilde{w} = w^{\alpha_{\varphi, j}}$ modulo $Z(G_\Delta)$.
		Since $\tilde{w} \in W(w)$, then for some $\zeta, \xi \in R$ (simultaneously not equal to zero)
		\[\left(w^\varphi\right)^\zeta = w^\xi \ \ \text{modulo $Z(G_\Delta)$.}\]
		Let $x_s \in \alpha(w)$. By \textit{Lemma \ref{lemma:comms}}, $\left[w^\varphi, x_s^\zeta\right] = \left[w, \left(x_s^\zeta\right)^\varphi\right]$. We rewrite the second commutator using \textit{Lemma \ref{lemma:gen}}:
		\[\left[w, \left(x_s^\zeta\right)^\varphi\right] = \left[w^\varphi, x_s^\zeta\right] = \left[w^\varphi, x_s\right]^\zeta = \left[w, x_s^\varphi\right]^\zeta = \left[w, x_s^{\alpha_{\varphi,j}}\right]^{\zeta} = [w, x_s]^{\alpha_{\varphi,j} \zeta}.\]
		Note that if $\tilde{w} = e$ modulo $Z(G_\Delta)$, i.\,e., $\xi = 0$, then $\zeta \ne 0$ and, by \textit{Lemma \ref{lemma:comms}}, $e = [w^{\varphi}, x_s] = [w, x_s]^{\alpha_{\varphi,j} \zeta}$. Then it can only be that $\alpha_{\varphi, j} = 0$ and hence $w^\varphi = w^{\alpha_{\varphi, j}}$ modulo $Z(G_\Delta)$.
		Let $\tilde{w} \ne e$ modulo $Z(G_\Delta)$. Then
		\[[w, x_s]^{\alpha_{\varphi,j} \zeta} = \left[w^\varphi, x_s^\zeta\right] = \left[\left(w^\varphi\right)^\zeta, x_s\right] = \left[w^\xi, x_s\right] = [w, x_s]^\xi.\]
		Thus, $\alpha_{\varphi,j} \zeta = \xi$ and $\left(w^\varphi\right)^\zeta = w^\xi = w^{\alpha_{\varphi,j} \zeta}$ modulo $Z(G_\Delta)$. Since the $R$-roots are unique in $G_\Delta / Z(G_\Delta)$, we obtain $w^\varphi = w^{\alpha_{\varphi,j}}$ modulo $Z(G_\Delta)$.
		
		\textbf{(2)} Since $Z(G_\Delta)$ is a free Abelian group, $g \in Z(G_\Delta)$ can be written as $g = y_1^{\gamma_{1}} \cdots y_l^{\gamma_{l}}$ modulo $G_\Delta'$, where $\gamma_{1}, \ldots, \gamma_{l} \in R$. Then, using \textit{Lemma \ref{lemma:gen}} and \textit{Remark \ref{remark:gen_for_cyclic}}, modulo $G_\Delta'$ we have that
		\begin{equation*}
			\begin{split}
				& g^\varphi = \left(y_1^{\gamma_{1}} \cdots y_l^{\gamma_{l}}\right)^\varphi = \left(y_1^{\gamma_{1}}\right)^\varphi \cdots \left(y_l^{\gamma_{l}}\right)^\varphi = \\
				&= y_1^{\beta_{\varphi, y_1^{\gamma_1}, 1}} \cdots y_l^{\beta_{\varphi, y_1^{\gamma_1}, l}} \cdot y_1^{\beta_{\varphi, y_2^{\gamma_2}, 1}} \cdots y_l^{\beta_{\varphi, y_2^{\gamma_2}, l}} \cdots y_1^{\beta_{\varphi, y_l^{\gamma_l}, 1}} \cdots y_l^{\beta_{\varphi, y_l^{\gamma_l}, l}} = \\
				& = y_1^{\sum_{j = 1}^l \beta_{\varphi, y_j^{\gamma_j}, 1}} \cdots y_l^{\sum_{j = 1}^l \beta_{\varphi, y_j^{\gamma_j}, l}}.
			\end{split}
		\end{equation*}
		From here we set $B_{\varphi, Z}\left(y_i^{\gamma_i}\right) = \left(\beta_{\varphi, y_i^{\gamma_i}, 1}, \ldots, \beta_{\varphi, y_i^{\gamma_i}, l}\right)$ for all $i = \overline{1, l}$. If there is some $\gamma_i = 0$, then $y_i^{\gamma_i} = e$ and, consequently, we set $B_{\varphi, Z}(e) = (\underbrace{0, \ldots, 0}_l)$.
		
		\textbf{(3)} Let $g \in G_\Delta'$. To find $g^\varphi$, by \textit{Proposition \ref{propos:baza}}, it suffices to find the images of all basic commutators. Let $[x_s, x_r] \ne e$, where $x_s, x_r \in X$, and let $x_s, x_r \in \Delta_q$. From \textit{Lemma \ref{lemma:comms}} and \textit{Lemma \ref{lemma:gen}}, we obtain that
		$[x_s, x_r]^\varphi = \left[x_s, x_r^\varphi\right] = \left[x_s, x_r^{\alpha_{\varphi,q}}\right] = [x_s, x_r]^{\alpha_{\varphi,q}}$.
	\end{proof}
	
	The following lemma shows that the elements of $\Gamma(G_\Delta)$, in the general case, on $Z(G_\Delta)$ do not behave as $R$-homomorphisms modulo $G_\Delta'$, that is, the $\left(Z(G_\Delta)\right)^\varphi$ modulo $G_\Delta'$ is not determined only by $Y^\varphi$.
	
	\begin{lem}\label{lemma:not_homomorphism_on_center}
		Let $\gamma_1, \ldots, \gamma_l, \beta_1, \ldots, \beta_l \in (R \setminus \mathbb{Z}) \cup \{0\}$ such that for all $i = \overline{1, l}$ if $\gamma_i = 0$ then $\beta_i = 0$. Then there is $\varphi \in \Gamma(G_\Delta)$ such that $\left(y_i^{\gamma_i}\right)^\varphi = y_i^{\beta_i}$ for all $i = \overline{1, l}$ if and only if there are additive mappings $f_i : R \rightarrow R$ such that $f_i(\gamma_i) = \beta_i$ for all $i = \overline{1, l}$.
	\end{lem}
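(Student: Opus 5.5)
The plan has two directions. In both, the equalities $(y_i^{\gamma_i})^\varphi = y_i^{\beta_i}$ are read modulo $G_\Delta'$, in the spirit of \textit{Lemma \ref{lemma:elems}}(2). (Literal equality is also attainable by the construction below, because we may choose the map to act trivially on the $G_\Delta'$-coordinates of central elements.)

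\textbf{Necessity.} Let $\varphi \in \Gamma(G_\Delta)$ satisfy the conditions, and fix $i$.
\begin{itemize}
\item Restrict $\varphi$ to $\langle y_i\rangle \cdot G_\Delta'$. By \textit{Lemma \ref{lemma:gen}} and \textit{Remark \ref{remark:gen_for_cyclic}}, every image $(y_i^r)^\varphi$ is central, so it has the form $y_1^{\beta_{r,1}}\cdots y_l^{\beta_{r,l}}$ modulo $G_\Delta'$.
\item Define $f_i(r) = \beta_{r,i}$, the exponent of $y_i$ in $(y_i^r)^\varphi$ modulo $G_\Delta'$. This is well defined by the uniqueness of normal forms in \textit{Proposition \ref{propos:baza}}.
\item Additivity: $y_i^{r}$ and $y_i^{s}$ commute, so the quasi-endomorphism property gives $(y_i^{r+s})^\varphi = (y_i^r)^\varphi (y_i^s)^\varphi$. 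Comparing $y_i$-exponents yields $f_i(r+s)=f_i(r)+f_i(s)$.
\item Finally $f_i(\gamma_i)=\beta_i$ by hypothesis.
\end{itemize}

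\textbf{Sufficiency.} Given additive $f_i$ with $f_i(\gamma_i)=\beta_i$, we construct $\varphi$ explicitly.
\begin{itemize}
\item Use the normal form of \textit{Proposition \ref{propos:baza}}, in which the central generators $y_j$ commute with everything. Write each $g$ uniquely as $g = g_0\, y_1^{r_1}\cdots y_l^{r_l}$, where $g_0$ involves only non-central generators and commutators.
\item Set $g^\varphi = y_1^{f_1(r_1)}\cdots y_l^{f_l(r_l)}$. That is, $\varphi$ kills the non-central part (acting as the scalar $0$ there) and applies $f_j$ coordinatewise on the central free-module part.
\end{itemize}

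\textbf{Verification that $\varphi \in \Gamma(G_\Delta)$.}
\begin{itemize}
\item Normality: conjugation fixes the exponents $r_j$ of the central generators, because it only changes the commutator part. The image is central, so $(h^{-1}gh)^\varphi = g^\varphi = h^{-1}g^\varphi h$.
\item Quasi-endomorphism: for any $g,h$, commuting or not, the $y_j$-exponents of $gh$ are the sums $r_j(g)+r_j(h)$. The $y_j$ never appear in commutators, since $[y_j,x]=e$. By additivity of $f_j$ we get $(gh)^\varphi=g^\varphi h^\varphi$. So $\varphi$ is in fact an endomorphism of the abstract group, and in particular a quasi-endomorphism.
\item Required values: $(y_i^{\gamma_i})^\varphi = y_i^{f_i(\gamma_i)} = y_i^{\beta_i}$. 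The hypothesis that $\gamma_i=0$ forces $\beta_i=0$ is consistent with $f_i(0)=0$.
\end{itemize}

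\textbf{Main obstacle.} The delicate point is necessity: making sure the extracted $f_i$ is well defined and additive on all of $R$ even though $\varphi$ may mix the central coordinates. This is handled by projecting onto the $y_i$-coordinate, which is additive by the quasi-endomorphism property on the abelian subgroup $\langle y_i\rangle$.

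The restriction $\gamma_i,\beta_i\in (R\setminus\mathbb Z)\cup\{0\}$ is never used in the argument above. It seems to serve only to make the statement nontrivial: for integer $\gamma_i$, \textit{Remark \ref{remark:comm_with_integers}} already forces $\beta_i$ to be determined by the image of $y_i$, so such exponents cannot be chosen freely.
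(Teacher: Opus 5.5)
Your proposal is correct and follows the paper's proof. For necessity you extract $f_i$ as the $y_i$-exponent of $(y_i^t)^\varphi$ and get additivity from the quasi-endomorphism property on the commuting elements $y_i^s, y_i^t$. For sufficiency your map, which kills the non-central part and applies $f_j$ to the central exponents, is exactly the paper's $\psi \circ \pi_Y$; the only difference is that you verify the homomorphism property directly from additivity of exponents in the normal form, where the paper invokes the universal property to obtain $\pi_Y$.
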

	
	\begin{proof}
		$\Leftarrow$. Following what has already been indicated, we recall that by $\langle Y \rangle$ we denote the $R$-subgroup of $G_\Delta$ with the set of generators $Y = \{y_1, \ldots, y_l\}$.
		Let us consider $\pi_Y : G_\Delta \rightarrow \langle Y \rangle$ be the homomorphism that kills every non-central vertex and fixes each $y_i$. It exists by the universal property, because $\langle Y \rangle$ is Abelian. Let $\psi(y_1^{a_1} \cdots y_l^{a_l}) = y_1^{f_1(a_1)} \cdots y_l^{f_l(a_l)}$ for $a_i \in R$. By first point of \textit{Proposition \ref{propos:baza}}, every element of $\langle Y \rangle$ has a unique form $y_1^{a_1} \cdots y_l^{a_l}$ for some $a_i \in R$, so $\psi$ is well defined. Set $\varphi = \psi \circ \pi_Y$. Then $\varphi$ is a group homomorphism, so it is a quasi-endomorphism. Its image is central, and $\varphi(h^{-1} g h) = \varphi(g) = h^{-1} \varphi(g) h$, so it is normal. Note that a mapping $\psi$ is only additive, not an $R$-homomorphism. This is fine because the definition of $\Gamma(G_\Delta)$ (normal quasi-endomorphisms) does not require $R$-linearity.
		
		$\Rightarrow$. For $t \in R$, let $f_i(t)$ be the exponent of $y_i$ in $\left(y_i^t\right)^\varphi$. The elements $y_i^s$ and $y_i^t$ commute, so $\left(y_i^{s+t}\right)^\varphi = \left(y_i^s\right)^\varphi \cdot \left(y_i^t\right)^\varphi$. Hence each $f_i$ is additive, and $f_i(\gamma_i) = \beta_i$.
	\end{proof}
	Additive mappings $f_i$ from the condition of \textit{Lemma \ref{lemma:not_homomorphism_on_center}} exist, for example, for the case when $R$ contains $\mathbb{Q}$ as a subring. Such additive mappings cannot be selected, for example, for the case when $p$ is a prime that is non-invertible in $R$, $\gamma = p \beta$, and $\beta \not\in p R$.
	
	Note that the first point of \textit{Lemma \ref{lemma:elems}} shows the image of a non-central element modulo $Z(G_\Delta)$. Therefore, if $w \in G_\Delta$ is a block element and $\alpha(w) \subseteq \Delta_i$ for some $i \in \{1, \ldots, k\}$, then
	\[w^\varphi = w^{\alpha_{\varphi, i}} \cdot y_1^{\beta_{\varphi, w, 1}} \cdots y_l^{\beta_{\varphi, w, l}} \cdot z,\]
	where the row $(\beta_{\varphi, w, 1}, \ldots \beta_{\varphi, w, l}) \in R^l$ corresponds to the element $w$, and $z \in G_\Delta'$. 
	Thus, let us further consider the mapping $B_\varphi : Q \cup Y_{cycl} \rightarrow R^l$, where $Q = \{v \in \langle q \rangle \ | \ $ $q \text{ is some block element such that if $|\alpha(q)| = 1$, then $\alpha(q) \not\subseteq Z(G_\Delta)$}\}$, such that $B_\varphi (y) = B_{\varphi, Z} (y)$ for any $y \in Y_{cycl}$, and $B_\varphi(q) = (\beta_{\varphi, q, 1}, \ldots, \beta_{\varphi, q, l})$ for any $q \in Q$. In addition, for the convenience of further recordings of block projection images, we define the mapping $B_\varphi$ onto the components $\Delta_i$ as follows: if $\Delta'(\alpha(w)) \subseteq \Delta_i$ and $w = q_1 \cdots q_n$ is a block decomposition then $B_\varphi(w) = \sum_{i = 1}^n B_\varphi(q_i)$.
	
	The following remark will not be used in this paper. It merely points out possible ways to generalize \textit{Lemma \ref{lemma:not_homomorphism_on_center}} and emphasizes the importance of the fact that the exponents of the central vertices in $(x_i^r)^{\varphi}$ may be unrelated to the exponents in $x_i^{\varphi}$ (see the last sentence in \textit{Remark \ref{remark:gen_for_cyclic}}).
	
	\begin{rem}\label{remark:not_homomorphism}
		Let $R$ be a field of characteristic zero, $w \in G_\Delta$ be a block element, and $\gamma_1, \ldots, \gamma_l, \beta_1, \ldots, \beta_l, \alpha, \beta'_1, \ldots, \beta'_l, \beta''_1, \ldots, \beta''_l \in (R \setminus \mathbb{Z}) \cup \{0\}$ such that for all $i = \overline{1, l}$ if $\gamma_i = 0$ then $\beta_i = 0$, and if $\alpha = 0$ then $\beta''_i = 0$. Let the following additive mappings also exist: 
		\begin{itemize}
			\item $f_i : R \rightarrow R$ such that $f_i(\gamma_i) = \beta_i$ for all $i = \overline{1, l}$, and
			\item $h_{i} : R \rightarrow R$ such that $h_{i}(1) = \beta_i'$ and $h_{i}(\alpha) = \beta_i''$ for all $i = \overline{1, l}$.
		\end{itemize}
		Then there is $\varphi \in \Gamma(G_\Delta)$ such that 
		\begin{itemize}
			\item $\left(y_i^{\gamma_i}\right)^\varphi = y_i^{\beta_i}$ for all $i = \overline{1, l}$, and
			\item $w^\varphi = w^{\alpha} \cdot y_1^{\beta'_1} \cdots y_l^{\beta'_l}$ and $\left(w^{\alpha}\right)^\varphi = w^{\alpha \cdot \alpha} \cdot y_1^{\beta''_1} \cdots y_l^{\beta''_l}$.
		\end{itemize}
		Indeed, this can be verified by conceptually following the proof of \textit{Lemma \ref{lemma:not_homomorphism_on_center}}.
	\end{rem}
	
	By writing $B_\varphi = 0$ and $B_{\varphi, Z} = 0$ we will mean that the mappings $B_\varphi$ and $B_{\varphi, Z}$ identically maps everything to the zero tuple.
	
	\begin{rem}\label{remark:B_tilde}
		The following statements are true:
		\begin{enumerate}
			\item if $\alpha_{\varphi, 1} = \ldots = \alpha_{\varphi, k} = 0$ and $B_{\varphi} = 0$, then $g^\varphi \in G_\Delta'$ for all $g \in G_\Delta$;
			\item if $B_{\varphi, Z} = 0$, then $g^\varphi \in G_\Delta'$ for all $g \in Z(G_\Delta) \setminus G_\Delta'$.
		\end{enumerate}
	\end{rem}
	\noindent
	To simplify further notation, for $B_\varphi(g) = (\beta_{\varphi, g, 1}, \ldots, \beta_{\varphi, g, l})$ we denote $B_{\varphi}(g)_i = \beta_{\varphi, g, i}$.
	
	\begin{rem}\label{remark:common_elems}
		As we noted at the beginning of this section, any element $g \in G_\Delta$ is written as $g = w_1 \cdots w_k y_1^{\gamma_1} \cdots y_l^{\gamma_l}z$, where $w_1 \cdots w_k$ is the decomposition of $g$  into block projections modulo $Z(G_\Delta)$, and $\gamma_1, \ldots, \gamma_l \in R$, $z \in G_\Delta'$. Then
		\[g^\varphi = w_1^{\alpha_{\varphi,1}} \cdots w_k^{\alpha_{\varphi,k}} y_1^{\beta_{\varphi,1}} \cdots y_l^{\beta_{\varphi,l}} \tilde{z},\] 
		where  
		$\beta_{\varphi,i} = \sum_{j = 1}^k B_\varphi(w_j)_i + \sum_{j = 1}^l B_\varphi\left(y_j^{\gamma_j}\right)_i$ for any $i = \overline{1, l}$, and $\tilde{z} \in G_\Delta'$. 
		Thus, the action of an arbitrary element of $\Gamma(G_\Delta)$ is completely determined by its action on the elements of $Q \cup Y_{cycl}$ and, by \textit{Lemma \ref{lemma:not_homomorphism_on_center}} and \textit{Remark \ref{remark:not_homomorphism}}, for some cases, cannot be determined by any proper subset in $Q \cup Y_{cycl}$.
		
		In addition, if $g \in Q \cup Y_{cycl}$, $\varphi \in \Gamma(G_\Delta)$, $s \in \mathbb{Z}$, then, using \textit{Remark \ref{remark:comm_with_integers}},
		\[B_{\varphi}(g^s) = s \cdot B_\varphi(g).\]
	\end{rem}
													
	It can be seen that the mappings $B_\varphi$ present for each $\varphi \in \Gamma(G_\Delta)$ (see \textit{Remark \ref{remark:not_homomorphism}} and \textit{Remark \ref{remark:common_elems}}) introduce sufficient technical complexity into the description of the structure of $\Gamma(G_\Delta)$. These mappings, in turn, arise from the central vertices $y_1, \ldots, y_l$ of the graph $\Delta$. Throughout the following, we will assume that $l = 0$, that is, there are no central vertices in the graph $\Delta$.
	
	The following definition is conceptually similar to \cite[Definition 5.1]{LioutMyas}.
	
	\begin{defi}\label{def:I}
		Let $I = \{\varphi \in \Gamma(G_\Delta) \ | \ \forall i = \overline{1, k} \ \ \alpha_{\varphi, i} = 0\}$.
	\end{defi}
													
	\begin{lem}\label{lemma:I_nilp}
		Let $\varphi \in \Gamma(G_\Delta)$. Then $\varphi$ is nilpotent if and only if $\varphi \in I$.
	\end{lem}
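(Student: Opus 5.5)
We prove both directions using Lemma \ref{lemma:elems} with $l = 0$, so $Z(G_\Delta) = G_\Delta'$ (no central vertices; by Proposition \ref{propos:baza} and Remark \ref{remark:C_gen} the center is exactly $G_\Delta'$). Throughout, nilpotency of $\varphi$ means $\varphi^m = 0$ in the ring $\Gamma(G_\Delta)$, i.e.\ $g^{\varphi^m} = e$ for all $g$. We also use the composition rule: since $(\varphi\psi)(g) = \psi(\varphi(g))$, iterating $\varphi$ on $g$ gives $g^{\varphi^m}$.

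\emph{Sufficiency.} Let $\varphi \in I$. By part (1) of Lemma \ref{lemma:elems}, for every $g = w_1\cdots w_k z$ we have
\[
g^\varphi = w_1^{0}\cdots w_k^{0} = e \quad \text{modulo } Z(G_\Delta) = G_\Delta',
\]
so $g^\varphi \in G_\Delta'$. This is also Remark \ref{remark:B_tilde}(1), since $B_\varphi$ is vacuous when $l=0$. Now write $g^\varphi = \prod [x_s,x_r]^{\gamma_{s,r}} = u_1\cdots u_k$ grouped by components. By part (3) of Lemma \ref{lemma:elems},
\[
(g^\varphi)^\varphi = u_1^{\alpha_{\varphi,1}}\cdots u_k^{\alpha_{\varphi,k}} = e.
\]
Hence $\varphi^2 = 0$ and $\varphi$ is nilpotent.

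\emph{Necessity.} Suppose $\varphi^m = 0$ and, for contradiction, that $\alpha_{\varphi,j} \neq 0$ for some $j$. Pick adjacent vertices $x_s, x_r \in \Delta_j$ in $\Delta'$; such a pair exists because $\Delta_j$ has more than one vertex and is connected. Then $[x_s,x_r] \neq e$. By part (3) of Lemma \ref{lemma:elems}, $[x_s,x_r]^\varphi = [x_s,x_r]^{\alpha_{\varphi,j}}$. Since $[x_s,x_r]^{\alpha_{\varphi,j}}$ again lies in $G_\Delta'$ with its support inside $\Delta_j$, we can iterate the lemma and obtain
\[
[x_s,x_r]^{\varphi^m} = [x_s,x_r]^{\alpha_{\varphi,j}^m}.
\]
Because $R$ is a domain, $\alpha_{\varphi,j}^m \neq 0$. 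By Remark \ref{remark:torsionFree} ($R$-torsion-freeness), or directly by Proposition \ref{propos:baza}(2), $[x_s,x_r]^{\alpha_{\varphi,j}^m} \neq e$, contradicting $\varphi^m = 0$. Therefore all $\alpha_{\varphi,i} = 0$, i.e.\ $\varphi \in I$.

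The only delicate point is applying Lemma \ref{lemma:elems} to the iterates. We use that the lemma holds for every element of $\Gamma(G_\Delta)$ and for arbitrary elements of $G_\Delta'$, so it can be applied repeatedly to the images. We also need that the scalars $\alpha_{\varphi,i}$ are well defined. They are, since they are determined by $x_i^\varphi$ modulo $Z(G_\Delta)$ via Lemma \ref{lemma:gen}, and Proposition \ref{propos:baza}(1) makes that exponent unique.
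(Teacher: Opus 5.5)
Your proof is correct. The sufficiency half coincides with the paper's argument: parts (1) and (3) of \textit{Lemma \ref{lemma:elems}} give $\varphi^2=0$.

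For necessity you use a different witness.
\begin{itemize}
\item The paper follows a vertex $x\in\Delta_i$ modulo $Z(G_\Delta)$. From \textit{Lemma \ref{lemma:gen}} and \textit{Remark \ref{remark:gen_for_cyclic}} it gets $x^{\varphi\cdot\varphi}=x^{\alpha_{\varphi,i}^2}$ modulo $Z(G_\Delta)$. It then concludes from the domain property and the freeness of $G_\Delta/G_\Delta'=G_\Delta/Z(G_\Delta)$ (\textit{Proposition \ref{propos:baza}}(1)).
\item You follow a basic commutator $[x_s,x_r]\neq e$ inside $G_\Delta'$, using part (3) of \textit{Lemma \ref{lemma:elems}*}, and conclude from the freeness of $G_\Delta'$ (\textit{Proposition \ref{propos:baza}}(2)).
\end{itemize}

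What your version buys:
\begin{itemize}
\item It gives an exact identity $[x_s,x_r]^{\varphi^m}=[x_s,x_r]^{\alpha_{\varphi,j}^m}$ rather than a congruence modulo the center.
\item It treats an arbitrary nilpotency index $m$ explicitly. The paper writes only the square and leaves the iteration to the reader.
\end{itemize}

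What the paper's version buys is that it uses only the generator-level description, which is the more primitive input. Part (3) is itself derived from \textit{Lemma \ref{lemma:gen}} via \textit{Lemma \ref{lemma:comms}}. Your iteration applies it to $[x_s,x_r]^{\beta}$ with $\beta\notin\mathbb{Z}$, which silently uses that $\varphi$ commutes with the $R$-action on $G_\Delta'$ (\textit{Remark \ref{remark:comms_cons}}). That use is legitimate, and it is covered by the statement of part (3).

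Both routes have the same core: iterate the scalar $\alpha_{\varphi,j}$, then use that $R$ is a domain and that a suitable $R$-module is free, so $\alpha_{\varphi,j}^m\neq 0$ cannot act trivially.
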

	
	\begin{proof}
		$\Leftarrow$. Directly follows from \textit{Definition \ref{def:I}} and the first and third points of \textit{Lemma \ref{lemma:elems}}. 
		
		$\Rightarrow$. Let $x \in X$ and $x \in \Delta_i$ for some $i$. Then, by \textit{Lemma \ref{lemma:gen}} and \textit{Remark \ref{remark:gen_for_cyclic}},
		\[x^{\varphi \cdot \varphi} = \left(x^\varphi\right)^\varphi = x^{\alpha_{\varphi, i} \cdot \alpha_{\varphi, i}} \text{ modulo $Z(G_\Delta)$}.\]
		Then, since $R$ is an integral domain, it turns out that $\alpha_{\varphi, i} = 0$ for any $i = \overline{1, k}$.
	\end{proof}
													
	\begin{lem}\label{lemma:I_images}
		Let $\varphi \in \Gamma(G_\Delta)$. Then $\varphi \in I$ if and only if $g^\varphi \in G_\Delta'$ for any $g \in G_\Delta$. In particular, if $\varphi \in I$, then  $z^\varphi = e$ for any $z \in G_\Delta'$.
	\end{lem}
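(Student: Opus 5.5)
We work under the standing assumption $l = 0$, so $Z(G_\Delta) \cap X = \emptyset$. First we check that $Z(G_\Delta) = G_\Delta'$. Let $g = x_1^{\gamma_1}\cdots x_n^{\gamma_n} z$ be central with some $\gamma_i \ne 0$. Since $x_i$ is not a central vertex, there is $x_j$ with $[x_i,x_j] \ne e$. By \textit{Proposition \ref{propos:det}}, the exponent of $y_{i,j}$ in $[g,x_j]$ is $\pm\gamma_i \ne 0$. \textit{Proposition \ref{propos:baza}} then gives $[g,x_j]\neq e$, a contradiction. Hence all statements of \textit{Lemma \ref{lemma:elems}} made ``modulo $Z(G_\Delta)$'' now hold modulo $G_\Delta'$.

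For the direction ``$\Rightarrow$'', let $\varphi\in I$ and $g\in G_\Delta$, with decomposition into block projections $g = w_1\cdots w_k$ modulo $G_\Delta'$. By point (1) of \textit{Lemma \ref{lemma:elems}},
\[g^\varphi = w_1^{\alpha_{\varphi,1}}\cdots w_k^{\alpha_{\varphi,k}} = w_1^0\cdots w_k^0 = e \pmod{G_\Delta'},\]
so $g^\varphi\in G_\Delta'$. If $g\in G_\Delta'$, this is immediate from point (3). For the converse, suppose $g^\varphi\in G_\Delta'$ for all $g$. Pick $x\in\Delta_i$; every non-central vertex lies in some $\Delta_i$, since otherwise it would be an isolated vertex of $\Delta'$, i.e. central. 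By \textit{Lemma \ref{lemma:gen}}, $x^\varphi = x^{\alpha_{\varphi,i}} z$ with $z\in Z(G_\Delta)=G_\Delta'$. Since $x^\varphi\in G_\Delta'$, \textit{Proposition \ref{propos:baza}}(1) (freeness of $G_\Delta/G_\Delta'$) forces $\alpha_{\varphi,i}=0$. As $i$ is arbitrary, $\varphi\in I$.

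For the ``in particular'', let $\varphi \in I$ and $z\in G_\Delta'$, written as $z = u_1\cdots u_k$ with $u_i$ a product of basic commutators $[x_s,x_r]$, $x_s,x_r\in\Delta_i$. No basic commutator involves vertices from different components, since such vertices commute. Point (3) of \textit{Lemma \ref{lemma:elems}} gives $z^\varphi = u_1^{0}\cdots u_k^{0} = e$. Alternatively, directly from \textit{Lemma \ref{lemma:comms}}: $[x_s,x_r]^\varphi = [x_s, x_r^\varphi]$, and $x_r^\varphi\in G_\Delta'\subseteq Z(G_\Delta)$, so this commutator is trivial. Then axiom (4) handles products of commuting central factors.

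The only delicate point is the identification $Z(G_\Delta)=G_\Delta'$ when $l=0$. This is what turns ``modulo $Z(G_\Delta)$'' into ``modulo $G_\Delta'$'' and guarantees that the $z_i$ in \textit{Lemma \ref{lemma:gen}} lie in $G_\Delta'$. Everything else is a direct read-off from \textit{Lemma \ref{lemma:elems}}.
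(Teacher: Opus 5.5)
Your proof is correct and is essentially the paper's argument: both directions are read off from Lemma~\ref{lemma:gen} and Lemma~\ref{lemma:elems} (cf.\ Remark~\ref{remark:common_elems}), and you usefully make explicit the identification $Z(G_\Delta)=G_\Delta'$ for $l=0$, which the paper uses tacitly. The one slip is in your optional alternative for the ``in particular'' part, which handles basic commutators but not their $R$-powers (and $\varphi$ need not commute with $R$-exponentiation); fix it by writing $[x_s,x_r]^\beta=[x_s,x_r^\beta]$ and using $(x_r^\beta)^\varphi\in G_\Delta'$, while your main argument via point (3) is unaffected.
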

	
	\begin{proof}
		$\Rightarrow$. This is obvious.
		
		$\Leftarrow$. It follows from the general form of the image of an element of the group $G_\Delta$ with respect to $\Gamma(G_\Delta)$ written in \textit{Lemma \ref{lemma:elems}} and \textit{Remark \ref{remark:common_elems}}.
	\end{proof}
													
	\begin{rem}\label{remark:nilp_two}
		For any $\varphi_1, \varphi_2 \in I$, the formula $\varphi_1 \cdot \varphi_2 = 0$ holds. Indeed, this follows from \textit{Lemma \ref{lemma:I_images}}.
	\end{rem}
	
	\begin{lem}\label{lemma:IJ}
		The set $I$ is a two-sided ideal in $\Gamma(G_\Delta)$.
	\end{lem}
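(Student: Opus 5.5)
The plan is to work with the characterization of $I$ from \textit{Lemma \ref{lemma:I_images}}: $\varphi \in I$ if and only if $g^\varphi \in G_\Delta'$ for all $g \in G_\Delta$. Recall that here $l = 0$, so there are no central vertices. We must check three things: $I$ is closed under addition, $I$ contains negatives, and $I$ absorbs multiplication by arbitrary elements of $\Gamma(G_\Delta)$ on both sides. The empty-condition case is trivial, since $0 \in I$.

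\emph{Additive closure.} Let $\varphi, \psi \in I$ and $g \in G_\Delta$. By definition of the sum in $P(G)$ we have $g^{\varphi+\psi} = g^\varphi g^\psi$, and this is a product of two elements of $G_\Delta'$, hence it lies in $G_\Delta'$. So $\varphi+\psi \in I$. For the additive inverse, note that $-\varphi$ in the ring $\Gamma(G_\Delta)$ is the mapping $g \mapsto (g^\varphi)^{-1}$, since $\varphi + (-\varphi) = 0$ forces $g^{-\varphi} = (g^\varphi)^{-1}$. This again takes values in $G_\Delta'$. Alternatively, one can argue directly with the exponents: by \textit{Lemma \ref{lemma:elems}}, $\alpha_{\varphi+\psi,i} = \alpha_{\varphi,i}+\alpha_{\psi,i}$ modulo $Z(G_\Delta)$ on the generators, and this is $0$.

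\emph{Two-sided absorption.} Take $\varphi \in I$ and $\psi \in \Gamma(G_\Delta)$. Recall the convention $g^{\varphi\cdot\psi} = (g^\varphi)^\psi$.

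For $\psi\cdot\varphi$, we have $g^{\psi\cdot\varphi} = (g^\psi)^\varphi \in G_\Delta'$ directly from \textit{Lemma \ref{lemma:I_images}} applied to $\varphi$ at the element $g^\psi$.

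For $\varphi\cdot\psi$, we have $g^{\varphi\cdot\psi} = (g^\varphi)^\psi$ with $g^\varphi \in G_\Delta'$, so we need $\psi$ to map $G_\Delta'$ into $G_\Delta'$. This follows from part (3) of \textit{Lemma \ref{lemma:elems}}: the image of $\prod u_i$ is $\prod u_i^{\alpha_{\psi,i}} \in G_\Delta'$. One can also see it from \textit{Lemma \ref{lemma:comms}}, since $[g,h]^\psi = [g^\psi,h]$, and $\psi$ acts multiplicatively on the abelian group $G_\Delta'$ by axiom (4). In either case, \textit{Lemma \ref{lemma:I_images}} then gives $\varphi\cdot\psi \in I$.

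The only step requiring care is this last invariance $(G_\Delta')^\psi \subseteq G_\Delta'$. It is handled by \textit{Lemma \ref{lemma:comms}} together with quasi-endomorphism additivity on commuting factors, because all elements of $G_\Delta'$ commute. Everything else is formal.
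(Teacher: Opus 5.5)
Your proof is correct and follows the paper's argument. Closure under addition and negation comes from the characterization in \textit{Lemma \ref{lemma:I_images}}, absorption on one side follows directly from that lemma, and absorption on the other side uses the invariance $(G_\Delta')^\psi \subseteq G_\Delta'$, which the paper also takes from the third point of \textit{Lemma \ref{lemma:elems}}; your alternative justification of that invariance via \textit{Lemma \ref{lemma:comms}} and multiplicativity on the central subgroup $G_\Delta'$ (as in \textit{Remark \ref{remark:comms_cons}}) works equally well.
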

													
	\begin{proof}
		Obviously, $0 \in I$ and if $\varphi \in I$, then $-\varphi \in I$.
		Let $\varphi, \psi \in I$, $g \in G_\Delta$. By \textit{Lemma \ref{lemma:I_images}}, we have $g^\varphi = z_1 \in G_\Delta'$ and $g^\psi = z_2 \in G_\Delta'$. Thus, $g^{\varphi + \psi} = g^\varphi g^\psi = z_1 z_2$ and $\varphi + \psi \in I$.
		
		Let $\varphi \in \Gamma(G_\Delta)$, $\psi \in I$, and $g \in G_\Delta$ be an arbitrary element. From \textit{Lemma \ref{lemma:I_images}}, it is obvious that $g^{\varphi \cdot \psi} = \left(g^\varphi\right)^\psi = z_1 \in G_\Delta'$, and therefore $\varphi \cdot \psi \in I$. On the other side, by, for example, the third point of \textit{Lemma \ref{lemma:elems}}, $g^{\psi \cdot \varphi} = \left(g^\psi\right)^\varphi = z_2^\varphi \in G_\Delta'$. Hence, $\psi \cdot \varphi \in I$.
	\end{proof}
													
	\begin{rem}
		Any non-trivial one-sided (left or right) ideal in $\Gamma(G_\Delta)$ intersects non-trivially with $I$. Indeed, let $J$ be a left (or right) sided ideal in $\Gamma(G_\Delta)$, $\varphi \in J \setminus I$ and $\alpha_{\varphi, i} \ne 0$ for some $i$. For $x \in X$ and $g \in G_\Delta$, let's define $\alpha_x(g)$ to be the exponent of $x$ in $g$. Let's consider the mapping $\psi(g) = [t_1, t_2]^{\alpha_{t_1}(g)}$ for $t_1, t_2 \in \Delta_i$ such that $[t_1, t_2] \ne e$. Then both $\psi \cdot \varphi$ and $\varphi \cdot \psi$ send $t_1$ to $[t_1, t_2]^{\alpha_{\varphi, i}} \ne e$, so they are non-zero, and they lie in $I$, which is a two-sided ideal in $\Gamma(G_\Delta)$. Whichever of them lies in $L$ is a non-zero element of $L \cap I$. 
	\end{rem}
													
	\begin{lem}\label{lemma:R_ideal}
		The following statements are true:
		\begin{enumerate}
			\item in $\Gamma(G_\Delta)$ there are $k$ subrings isomorphic to the binomial domain $R$, all pairwise intersections of which are equal to $\{0\}$;
			\item the sum $R^k = R_1 \oplus \cdots \oplus R_k$ (here $R_i = R$ for all $i = \overline{1, k}$) is a unital subring in $\Gamma(G_\Delta)$, and each $R_i$ is a two-sided ideal of $R^k$;
			\item the sets $R_i \oplus I = \{\varphi \in \Gamma(G_\Delta) \ | \ \alpha_{\varphi, j} = 0 \text{ for all } j \ne i\}$ are two-sided ideals in $\Gamma(G_\Delta)$.
		\end{enumerate}
	\end{lem}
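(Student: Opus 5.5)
We construct the subrings explicitly. For each $i = \overline{1, k}$ and $\lambda \in R$, we define $\varphi_{i,\lambda} \colon G_\Delta \to G_\Delta$ by means of an $R$-endomorphism. Since $l = 0$, every vertex lies in exactly one component $\Delta_j$. Let $\varepsilon_i$ be the $R$-endomorphism of $G_\Delta$ that fixes the generators of $\Delta_i$ and kills all others. It exists by the universal property: relations $[x_s,x_r]$ with both vertices in $\Delta_i$ are preserved, and all others map to $e$. Set $g^{\varphi_{i,\lambda}} = \left(g^{\varepsilon_i}\right)^\lambda$.

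\textbf{Step 1: $\varphi_{i,\lambda} \in \Gamma(G_\Delta)$.} Normality follows from axiom (3) and the fact that $\varepsilon_i$ is a homomorphism. Indeed, $(h^{-1}gh)^{\varphi_{i,\lambda}} = (h^{\varepsilon_i})^{-1}(g^{\varepsilon_i})^\lambda h^{\varepsilon_i}$. This must equal $h^{-1}(g^{\varepsilon_i})^\lambda h$. That holds because $h$ and $h^{\varepsilon_i}$ differ by a factor $h'$ lying in the subgroup generated by the other components together with $G_\Delta'$. Every generator of the other components commutes with every generator of $\Delta_i$, since vertices in different components of $\Delta'$ are adjacent in $\Delta$. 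Hence $h'$ centralizes $\langle \Delta_i\rangle$, whose $R$-closure contains $(g^{\varepsilon_i})^\lambda$. The quasi-endomorphism property follows from axiom (4): if $[g,h]=e$ then $[g^{\varepsilon_i},h^{\varepsilon_i}]=e$, so $(g^{\varepsilon_i}h^{\varepsilon_i})^\lambda = (g^{\varepsilon_i})^\lambda(h^{\varepsilon_i})^\lambda$.

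\textbf{Step 2: ring structure.} We check that $\lambda \mapsto \varphi_{i,\lambda}$ is an injective ring homomorphism $R \to \Gamma(G_\Delta)$ onto a subring $R_i$. Additivity is axiom (2). For multiplicativity, $\varepsilon_i$ commutes with $R$-powers and is idempotent, so $\varphi_{i,\lambda}\cdot\varphi_{i,\mu} = \varphi_{i,\lambda\mu}$. Injectivity follows from $R$-torsion-freeness (Remark \ref{remark:torsionFree}) applied to a generator of $\Delta_i$. By Lemma \ref{lemma:gen}, $\alpha_{\varphi_{i,\lambda},j} = \lambda\delta_{ij}$. Therefore $R_i \cap R_j = \{0\}$ for $i \neq j$, which gives (1).

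For (2), we note $\varphi_{i,\lambda}\cdot\varphi_{j,\mu} = 0$ for $i \ne j$, because $\varepsilon_i\varepsilon_j$ kills all generators. Next, $\sum_i \varphi_{i,1}$ acts on $g$ by $g \mapsto \prod_i g^{\varepsilon_i}$. This agrees with $1$ on generators, and both maps are endomorphisms. For the sum, this uses the fact that the images $g^{\varepsilon_i}$ pairwise commute, which makes $\prod_i \varepsilon_i$ a homomorphism. Hence $\sum_i \varphi_{i,1} = 1$. The sum $R_1 + \cdots + R_k$ is direct by reading off the $\alpha_{\varphi,j}$. It is closed under multiplication by these orthogonality relations, so each $R_i$ is a two-sided ideal of $R^k$.

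\textbf{Step 3: the ideals $R_i \oplus I$.} The map $\Gamma(G_\Delta) \to R^k$, $\varphi \mapsto (\alpha_{\varphi,1},\ldots,\alpha_{\varphi,k})$, is a ring homomorphism. Additivity is clear from Lemma \ref{lemma:gen}. Multiplicativity follows from Remark \ref{remark:gen_for_cyclic} and Lemma \ref{lemma:elems}, which give $x^{\varphi\psi} = x^{\alpha_{\varphi,j}\alpha_{\psi,j}}$ modulo $Z(G_\Delta)$. Its kernel is $I$, and Step 2 shows it is surjective. The set in (3) is the preimage of the ideal $R_i$ of $R^k$, hence a two-sided ideal. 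It equals $R_i \oplus I$, since any $\varphi$ in it can be written as $\varphi_{i,\alpha_{\varphi,i}} + (\varphi - \varphi_{i,\alpha_{\varphi,i}})$, where the second summand lies in $I$.

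\textbf{Main obstacle.} The delicate point is the normality check in Step 1. It requires verifying carefully that conjugation by $h$ and by $h^{\varepsilon_i}$ agree on $R$-powers of elements of $\langle\Delta_i\rangle$. This relies on the other components centralizing $\langle\Delta_i\rangle$ as an $R$-subgroup, for which Lemma \ref{lemma:C_invariant} suffices.
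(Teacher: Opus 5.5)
Your proposal is correct and follows essentially the paper's own route. Your $\varepsilon_i$ is the paper's $\pi_i$ and your $\varphi_{i,\lambda}$ is its $\varphi_i$; (2) rests on $\pi_i\pi_j=0$ and $\sum_i\pi_i=1$, and (3) is obtained as the preimage of $R_i$ under the ring homomorphism $\varphi\mapsto(\alpha_{\varphi,1},\ldots,\alpha_{\varphi,k})$. You are more careful than the paper in two places: you justify the normality equality $\pi_i(h)^{-1}a\,\pi_i(h)=h^{-1}ah$ for $a\in\langle\Delta_i\rangle$ using Lemma~\ref{lemma:C_invariant}, and you check that $\sum_i\varphi_{i,1}$ is the identity of $\Gamma(G_\Delta)$. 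The one point you should add there is that $g\mapsto\prod_i g^{\varepsilon_i}$ is an $R$-endomorphism (by axiom (4), since the factors pairwise commute), because agreement on generators only determines $R$-endomorphisms.
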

													
	\begin{proof}
		(1) Let $\varphi \in \Gamma(G_\Delta)$ and $g \in G_\Delta$ be chosen arbitrarily. Let $g = w_1 \cdots w_k z$ (see \textit{Remark \ref{remark:common_elems}}), where $w_1 \cdots w_k$ is the decomposition of $g$ into block projections modulo $Z(G_\Delta)$, and $z \in G_\Delta'$. Then 
		\[g^\varphi = w_1^{\alpha_{\varphi,1}} \cdots w_k^{\alpha_{\varphi,k}} \tilde{z},\]
		where  
		$\tilde{z} \in G_\Delta'$. Let $\pi_i$, $i = \overline{1, k}$, the $R$-endomorphism of $G_\Delta$ that fixes the vertices of $\Delta_i$ and kills all other vertices. Let us denote by $\varphi_i$, $i = \overline{1, k}$, the mapping of the following form (recall that $g$ is chosen arbitrarily): $g \mapsto \pi_i(g)^{\alpha_{\varphi, i}}$. The mapping $\varphi_i$ is normal, since $(h^{-1} g h)^{\varphi_i} = (\pi_i(h)^{-1} \pi_i(g) \pi_i(h))^{\alpha_{\varphi, i}} = \pi_i(h)^{-1} \pi_i(g)^{\alpha_{\varphi, i}} \pi_i(h) = h^{-1} g^{\varphi_i} h$ for any $h \in G_\Delta$. Since $\alpha_{\varphi, i} \in R$ and $G_\Delta$ is a Hall $R$-exponential group (in particular, $R$-exponential group), for $h_1, h_2 \in G_\Delta$ such that $h_1 = w_1' \cdots w_k' z'$, $h_2 = w_1'' \cdots w_k'' z''$, $[h_1, h_2] = e$, and hence $[w_j', w_j''] = e$ for all $j = \overline{1, k}$,
		\begin{equation*}
			\begin{split}
				(h_1 h_2)^{\varphi_i} &= ((w_1' w_1'') \cdots (w_k' w_k'') (z' z''))^{\varphi_i} = (w_i' w_i'')^{\alpha_{\varphi, i}} \pi_i(z' z'')^{\alpha_{\varphi, i}} = \\
				&= (w_i')^{\alpha_{\varphi, i}} (w_i'')^{\alpha_{\varphi, i}} \pi_i(z')^{\alpha_{\varphi, i}} \pi_i(z'')^{\alpha_{\varphi, i}} = h_1^{\varphi_i} h_2^{\varphi_i}.
			\end{split}
		\end{equation*}
		Thus, $\varphi_i \in \Gamma(G_\Delta)$ for all $i = \overline{1, k}$. Besides this, it is easy to notice that $\{\varphi_i \ | \ \varphi \in \Gamma(G_\Delta)\} \cong R$ for all $i = \overline{1, k}$. Further, for convenience, we denote $R_i = \{\varphi_i \ | \ \varphi \in \Gamma(G_\Delta)\}$. It is not difficult to see that $R_i \cap R_j = \{0\}$ for all $i \ne j$. If $\varphi, \psi \in \Gamma(G_\Delta)$ then obviously $-\varphi_i, \varphi_i + \psi_i, \varphi_i \cdot \psi_i \in R_i$ for all $i = \overline{1, k}$. Note that
		the subrings $R_i$, unlike $I$, are neither left nor right sided ideals in $\Gamma(G_\Delta)$.
		
		(2) It is almost obvious from the first point. Directness follows by evaluating $\sum_{i = 1}^k \varphi_i = 0$ at a vertex of each $\Delta_j$. Identity in $R^k$ is $\pi_1 + \cdots + \pi_k$. In addition, $\pi_i \cdot \pi_j = 0$ for $i \ne j$, so $R_i \cdot R_j = 0$.
		
		(3) Follows directly from \textit{Lemma \ref{lemma:elems}}, \textit{Lemma \ref{lemma:IJ}}, the proof of the first point, and that $\varphi \mapsto (\alpha_{\varphi, 1}, \ldots, \alpha_{\varphi, k})$ is a ring homomorphism $\Gamma(G_\Delta) \rightarrow R^k$. So $R_i \oplus I$ is the preimage of an ideal $R_i$ in $R^k$.
	\end{proof}
													
	\begin{defi}
		Let's call the semidirect product $R^k \ltimes I$ the set of pairs $(s, i)$, where $s \in R^k, i \in I$, with 
		\[(s_1, i_1) + (s_2, i_2) = (s_1 + s_2, \ i_1 + i_2),\]
		\[(s_1, i_1) \cdot (s_2, i_2) = (s_1 \cdot s_2, \ s_1 \cdot i_2 + i_1 \cdot s_2 + i_1 \cdot i_2).\]
	\end{defi}
	The following facts about $R^k \ltimes I$ are true:
	\begin{itemize}
		\item $R^k \ltimes I$ is a ring with identity $(1, 0)$.
		\item The projection $p : (s, i) \mapsto s \in R^k$ is a surjective ring homomorphism. Its kernel $\{(0, i)\}$ is a copy of $I$, so it is a two-sided ideal in $R^k \ltimes I$, and $(R^k \ltimes I) / I \cong R^k$.
		\item The map $\sigma : R^k \ni s \mapsto (s, 0)$ is a ring homomorphism with $p \circ \sigma = id$. So the extension $0 \rightarrow I \rightarrow R^k \ltimes I \rightarrow R^k \rightarrow 0$ splits. Additively $R^k \ltimes I = R^k \oplus I$.
		\item Inside $R^k \ltimes I$, $(s, 0) \cdot (0, i) = (0, s \cdot i)$ and $(0, i) \cdot (s, 0) = (0, i \cdot s)$.
	\end{itemize}
													
	Summarizing all our reasoning, we formulate and prove the following theorem on the structure of the centroid for case $l = 0$.
													
	\begin{thm}\label{theorem:main}
		Let $R$ be a binomial domain, $G_\Delta$ be an arbitrary finitely generated graph two-step nilpotent Hall $R$-group with commutativity graph $\Delta$, and $\Delta$ is a graph without central vertices, i.\,e., $l = 0$. Then 
		\[\Gamma(G_\Delta) \cong R^k \ltimes I,\] 
		and $I^2 = 0$, where 
		\begin{itemize}
			\item $R^k$ is the direct sum of $k$ copies of $R$, and
			\item $k$ is the number of connected components 
			with more than one vertex
			in the non-commutativity graph $\Delta'$. 
		\end{itemize}
	\end{thm}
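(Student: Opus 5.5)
We will build an explicit ring isomorphism $\Phi : \Gamma(G_\Delta) \to R^k \ltimes I$ out of the homomorphism $\rho : \varphi \mapsto (\alpha_{\varphi,1}, \ldots, \alpha_{\varphi,k})$. By \textit{Lemma \ref{lemma:elems}} (with $l = 0$, so $Z(G_\Delta) = G_\Delta'$), the tuple $\rho(\varphi)$ is well defined. By \textit{Lemma \ref{lemma:gen}} and \textit{Remark \ref{remark:gen_for_cyclic}}, $\alpha_{\varphi+\psi,j} = \alpha_{\varphi,j} + \alpha_{\psi,j}$ and $\alpha_{\varphi\psi,j} = \alpha_{\varphi,j}\alpha_{\psi,j}$: evaluate on a vertex $x \in \Delta_j$ modulo $G_\Delta'$ and use that $G_\Delta/G_\Delta'$ is a free $R$-module (\textit{Proposition \ref{propos:baza}}). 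Thus $\rho : \Gamma(G_\Delta) \to R^k$ is a unital ring homomorphism with kernel $I$ (\textit{Definition \ref{def:I}}). It is surjective, and it splits through the subring $R_1 \oplus \cdots \oplus R_k$ of \textit{Lemma \ref{lemma:R_ideal}}. Concretely, define $\sigma(s_1, \ldots, s_k) = \sum_i \sigma_i(s_i)$ with $g^{\sigma_i(s_i)} = \pi_i(g)^{s_i}$. We check that $\sigma$ is a ring homomorphism using $\pi_i\pi_j = 0$ for $i \ne j$, and that $\rho\circ\sigma = \mathrm{id}$.

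Next set $\Phi(\varphi) = (\rho(\varphi), \ \varphi - \sigma(\rho(\varphi)))$. The second coordinate lies in $I$, since $\rho$ kills it. Additivity of $\Phi$ is immediate. Its inverse is $(s, i) \mapsto \sigma(s) + i$, so $\Phi$ is a bijection. For multiplicativity, write $\varphi = \sigma(s_1) + i_1$ and $\psi = \sigma(s_2) + i_2$ and expand $\varphi\psi$ in $\Gamma(G_\Delta)$. Distributivity holds there by \textit{Theorem \ref{theorem:Centroid}}, so
\[
\varphi\psi = \sigma(s_1 s_2) + \bigl(\sigma(s_1) i_2 + i_1 \sigma(s_2) + i_1 i_2\bigr).
\]
This matches the multiplication rule in $R^k \ltimes I$ exactly, where $s \cdot i$ is interpreted as $\sigma(s) i$, an element of $I$ because $I$ is a two-sided ideal (\textit{Lemma \ref{lemma:IJ}}).

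Finally, $I^2 = 0$ is \textit{Remark \ref{remark:nilp_two}}. If $\varphi_1 \in I$, then by \textit{Lemma \ref{lemma:I_images}} we have $g^{\varphi_1} \in G_\Delta'$, and $\varphi_2 \in I$ kills $G_\Delta'$. So $g^{\varphi_1\varphi_2} = e$.

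The main obstacle is the multiplicativity of $\rho$, because exponents of the images of non-generators are only controlled modulo $Z(G_\Delta)$. Here $l = 0$ is essential: it makes $Z(G_\Delta) = G_\Delta'$, so computing $(x^{\varphi})^{\psi} = (x^{\alpha_{\varphi,j}} z)^{\psi}$ modulo $G_\Delta'$ reduces, via \textit{Remark \ref{remark:gen_for_cyclic}} (note $z^\psi \in G_\Delta'$), to $x^{\alpha_{\varphi,j}\alpha_{\psi,j}}$. A second point to check carefully is the ring structure on $\mathrm{im}\,\sigma$: the $R_i$ are sums of maps $g \mapsto \pi_i(g)^{s}$, and $\sigma$ must respect products. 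This holds because $(\pi_i(g)^{s})^{\sigma_j(t)} = \pi_j(\pi_i(g)^s)^t$, which equals $\pi_i(g)^{st}$ if $i = j$ and $e$ otherwise, since $\pi_j$ is an $R$-endomorphism.
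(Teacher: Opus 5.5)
Your proposal is correct and follows essentially the same route as the paper: the paper likewise writes $\varphi = \sum_i \varphi_i + \varphi'$ with $\varphi_i \colon g \mapsto \pi_i(g)^{\alpha_{\varphi,i}}$ (your $\sigma(\rho(\varphi))$), observes $\varphi' \in I$, gets multiplicativity of $s + i \mapsto (s,i)$ from $R^k$ being a subring and $I$ a two-sided ideal, and obtains $I^2 = 0$ from Remark~\ref{remark:nilp_two}. Your version only makes explicit two facts the paper uses more tersely via Lemma~\ref{lemma:R_ideal}: that $\rho$ is a ring homomorphism with kernel $I$ (your check modulo $G_\Delta' = Z(G_\Delta)$), and that $\sigma$ is a multiplicative splitting.
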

													
	\begin{proof}
		Nilpotency of $I$ follows from \textit{Lemma \ref{lemma:I_nilp}} and \textit{Remark \ref{remark:nilp_two}}.
		By \textit{Lemma \ref{lemma:IJ}}, $I$ is a two-sided ideal in $\Gamma(G_\Delta)$. In addition, $R^k \cap I = \{0\}$. 
		
		Let $\varphi \in \Gamma(G_\Delta)$ and $g \in G_\Delta$ be chosen arbitrarily, and $g = w_1 \cdots w_k z$. We retain the notation $\varphi_i$ ($i = \overline{1, k}$) for the mapping $w_1 \cdots w_k z \mapsto w_i^{\alpha_{\varphi, i}} \pi_i(z)^{\alpha_{\varphi, i}}$ from the proof of \textit{Lemma \ref{lemma:R_ideal}}.
		Let us define $\varphi' = \varphi - \sum_{i = 1}^k \varphi_i$. Since $\Gamma(G_\Delta)$ is a ring and each $\varphi_i \in \Gamma(G_\Delta)$, $\varphi' \in \Gamma(G_\Delta)$. Moreover, $\varphi' \in I$, since all $\alpha_{\varphi', i} = 0$. Then $\varphi = \sum_{i = 1}^k \varphi_i + \varphi'$, and this decomposition is unique because $R^k \cap I = \{0\}$.
		
		For $s \in R^k$ and $i \in I$, the map $s + i \mapsto (s, i)$ is multiplicative because $(s + i) \cdot (s' + i') = s s' + (si' + is' + ii')$ with $ss' \in R^k$ ($R^k$ is a subring by \textit{Lemma \ref{lemma:R_ideal}}) and the bracket in $I$ ($I$ is a two-sided ideal by \textit{Lemma \ref{lemma:IJ}}).
	\end{proof}
													
	The centroid $\Gamma(G_\Delta)$ is not the direct product $R^k \times I$ for two reasons:
	\begin{enumerate}
		\item The identity acts nontrivially on $I$. The identity $1$ of $\Gamma(G_\Delta)$ lies in $R^k$. In a direct product $R^k \times I$ we would have $R^k \cdot I = 0$, so every $\varphi \in I$ would satisfy $\varphi = 1 \cdot \varphi = 0$, forcing $I = 0$. But $I \ne 0$ for $k \ge 1$.
		\item For $k \ge 2$ the centroid is not even commutative. For $k = 1$ commutativity depends on $R$. It fails for example for $R = \mathbb{Q}(t)$ and the free group of rank $2$.
	\end{enumerate}							
													
	\begin{rem}\label{remark:ZplusI}
		$\Gamma(G_\Delta) \cong R \ltimes I$ if and only if the non-commutativity graph $\Delta'$ has exactly one connected component. In particular, if $\Delta$ is not connected, then $\Gamma(G_\Delta) \cong R \ltimes I$.
	\end{rem}
													
	The following examples demonstrate the application of \textit{Theorem \ref{theorem:main}} to specific groups.
	
	\begin{exa}\label{example:centroid_free}
		If $G_\Delta$ is a free two-step nilpotent Hall $R$-group, then $k = 1$, $l = 0$, and $I^2 = 0$. Then $\Gamma(G_\Delta) \cong R \ltimes I$ (see also \cite[Theorem 5.2]{LioutMyas}).
	\end{exa}
													
	\begin{exa}\label{example4}
		Let $G_\Delta = \langle x_1, x_2, x_3, x_4 \ | \ [x_1, x_3], [x_1, x_4], [x_2, x_3], [x_2, x_4] \rangle$ be a two-step nilpotent graph Hall $R$-group with commutativity graph $\Delta$ being a (2,2)-bipartite graph (the so-called $B_2$ graph \cite{Treier2020}). Then $k = 2$, $l = 0$ and hence $\Gamma(G_\Delta) \cong R^2 \ltimes I$.
	\end{exa}
	
	\begin{figure}[ht]
		\centering
		\includegraphics[width=45mm]{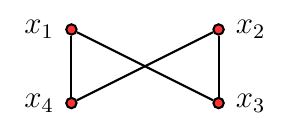}
		
		\footnotesize{Fig. 1. Commutativity graph $\Delta$ from \textit{Example \ref{example4}}.}
	\end{figure}
													
	Next, we show, for example, the structure of the centroid of partially commutative groups with commutativity graphs from a specific class. Following \cite{Treier2020}, let us recall the definition of a $B_q$-graph, where $q$ is a positive integer.
	Let $B_1$ be the graph consisting of two disjoint vertices. The series of graphs $B_1, B_2, \ldots, B_q, \ldots$ is defined by induction starting from $B_1$.
	Let $B_q$ already defined for some $q$. Then $V(B_{q+1}) = V(B_q) \cup \{u, v\}$ and both $u, v$ are connected with all vertices of $B_q$ and $u$ isn't connected with $v$. It is easy to see that in the $B_q$-graph the number $q$ is equal to the number of all connected components with more than one vertex in the non-commutativity graph (in the notation of this section, this is the number $k$). The following remark is aimed only at better understanding the structure of graph groups defined by $B_k$-graphs.
	
	\begin{rem}
		Let $R$ be a binomial domain. If $G_\Delta$ is a finitely generated graph two-step nilpotent Hall $R$-group with commutativity graph $\Delta$, where $\Delta$ is a $B_k$-graph, then $G_\Delta \cong F_2^k$, where $F_2$ is a free two-step nilpotent Hall $R$-group of rank $2$.
	\end{rem}
	
	Based on \textit{Example \ref{example4}}, we obtain 
													
	\begin{cor}
		Let $R$ be a binomial domain, $G_\Delta$ be an arbitrary finitely generated graph two-step nilpotent Hall $R$-group with commutativity graph $\Delta$, and $\Delta$ is a $B_k$-graph. Then $\Gamma(G_\Delta) \cong R^k \ltimes I$.
	\end{cor}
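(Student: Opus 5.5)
The Corollary follows from Theorem \ref{theorem:main}, so the plan is to verify its two hypotheses for a $B_k$-graph $\Delta$. First, $\Delta$ has no central vertices, so $l = 0$. Second, the non-commutativity graph $\Delta'$ has exactly $k$ connected components with more than one vertex. Once both are established, Theorem \ref{theorem:main} gives $\Gamma(G_\Delta) \cong R^k \ltimes I$ directly, with $I^2 = 0$. Finite generation is automatic, since $B_k$ has $2k$ vertices.

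I would describe $B_k$ explicitly by induction on $k$. The vertex set is $\{u_1, v_1, \ldots, u_k, v_k\}$, where $\{u_1, v_1\}$ are the two vertices of $B_1$ and $\{u_{q+1}, v_{q+1}\}$ is the pair added at step $q+1$. In $B_{q+1}$ the new vertices $u_{q+1}, v_{q+1}$ are joined to every vertex of $B_q$ and not to each other. Edges among the old vertices are unchanged by this step. Hence, by induction, the only non-adjacent pairs in $B_k$ are the pairs $\{u_i, v_i\}$, and two vertices from different pairs are always adjacent.

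Passing to the complement, $\Delta'$ is a perfect matching: it is the disjoint union of the $k$ edges $u_i v_i$. Therefore $\Delta'$ has exactly $k$ connected components, each consisting of two vertices. Moreover, no vertex is central in $\Delta$. Indeed, a central vertex $x$ would have $x^\perp = X$, that is, $x$ would be isolated in $\Delta'$. But every $u_i$ and every $v_i$ has degree $1$ in $\Delta'$. So $l = 0$.

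Both hypotheses of Theorem \ref{theorem:main} hold with this $k$, which completes the argument. For $k = 2$ this recovers Example \ref{example4}. The only step requiring any care is the inductive description of the edges of $B_k$, and that step is elementary.
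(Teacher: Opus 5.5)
Your proposal is correct and follows essentially the same route as the paper: the paper obtains the corollary by applying Theorem~\ref{theorem:main}, using the fact that a $B_k$-graph has no central vertices and its non-commutativity graph has exactly $k$ components with more than one vertex. The paper only asserts this fact as ``easy to see''; your perfect-matching description of $\Delta'$ supplies the omitted verification.
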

	
	\section{Rigidity of the group $G_\Delta$}\label{section:rigidity}
	
	Recall that by $G^R$ we denote the Hall $R$-completion of a group $G$ (see, for example, \cite{PhHall, LioutMyas}).
	
	\begin{defi}[\cite{GrunSeg}]\label{def:rigidity}
		A torsion-free nilpotent group $G$ is called \textit{rigid} if from $G^R \cong G^S$ it follows that $R \cong S$, where $R$ and $S$ are arbitrary binomial domains of characteristic $0$.
	\end{defi}
	
	The rigidity problem, formulated in \cite{GrunSeg}, is the following: is every finitely generated non-Abelian free nilpotent group rigid? In \cite[Theorem 6.1]{LioutMyas} it was answered in the affirmative. Rigidity results form an essential part of the F. Grunewald and D. Segal project \cite{GrunSeg} aimed at describing torsion-free nilpotent groups up to isomorphism. 
	
	In this section we show that there are infinitely many partially commutative two-step nilpotent rigid groups distinct from the non-Abelian free two-step nilpotent groups. We retain the notations from \textit{Theorem \ref{theorem:main}}.
	
	\begin{lem}\label{lemma:rigidity_t}
		Let $G_\Delta$ be a finitely generated partially commutative two-step nilpotent group, $\Delta$ be a commutativity graph without central vertices, and $R$ and $S$ be binomial domains. If $G_\Delta^R \cong G_\Delta^S$, then $R^k \cong S^k$.
	\end{lem}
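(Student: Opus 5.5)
The plan is to show that the ring $R^k$ can be recovered from the abstract group $G_\Delta^R$ alone, as an invariant of the centroid. Since $\Gamma(G)$ is defined purely group-theoretically (normal quasi-endomorphisms with pointwise multiplication and composition), any group isomorphism $\theta : G_\Delta^R \to G_\Delta^S$ induces a ring isomorphism $\Gamma(G_\Delta^R) \cong \Gamma(G_\Delta^S)$ via $\varphi \mapsto \theta^{-1}\varphi\theta$. This conjugated map is again normal and a quasi-endomorphism, and the operations are preserved because they are defined pointwise. Note that $\theta$ need only be an abstract group isomorphism: the $R$- and $S$-structures play no role in defining $\Gamma$.

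By Theorem \ref{theorem:main}, applied once with ring $R$ and once with ring $S$ (the graph $\Delta$, hence $k$ and $l=0$, is the same), we get $\Gamma(G_\Delta^R) \cong R^k \ltimes I_R$ and $\Gamma(G_\Delta^S) \cong S^k \ltimes I_S$. So it suffices to identify $I$ intrinsically inside the centroid and then pass to the quotient. By Lemma \ref{lemma:I_nilp}, $I$ is exactly the set of nilpotent elements of $\Gamma(G_\Delta)$. Equivalently, by Lemma \ref{lemma:I_images}, it is the set of $\varphi$ with $G^\varphi \subseteq G'$. Either description is preserved by any ring isomorphism, or by conjugation by $\theta$ since $\theta(G')=G'$. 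Hence the isomorphism $\Gamma(G_\Delta^R)\cong\Gamma(G_\Delta^S)$ maps $I_R$ onto $I_S$.

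It then induces an isomorphism of quotients $\Gamma(G_\Delta^R)/I_R \cong \Gamma(G_\Delta^S)/I_S$. By the listed properties of the semidirect product, namely that the projection $p$ is surjective with kernel $I$, these quotients are $R^k$ and $S^k$. This gives $R^k\cong S^k$.

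The main point to verify carefully is that $I$ in Theorem \ref{theorem:main} really corresponds to the kernel of the projection onto $R^k$ under the stated isomorphism. It does, by the decomposition $\varphi=\sum\varphi_i+\varphi'$ in that proof. Also, Lemma \ref{lemma:I_nilp} gives the characterization of $I$ as the nilpotents uniformly for any binomial domain, since that lemma only uses that $R$ is an integral domain, so it applies to both $R$ and $S$.
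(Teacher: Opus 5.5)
Your proposal is correct and follows the paper's proof. In both, the group isomorphism induces a centroid isomorphism, Theorem~\ref{theorem:main} identifies the two centroids with $R^k \ltimes I_1$ and $S^k \ltimes I_2$, Lemma~\ref{lemma:I_nilp} shows that any ring isomorphism carries the set of nilpotent elements $I_1$ onto $I_2$, and passing to the quotients gives $R^k \cong S^k$. Your explicit conjugation by $\theta$ and the alternative description of $I$ via Lemma~\ref{lemma:I_images} only fill in details the paper leaves implicit.
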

	
	\begin{proof}
		The idea of the proof is based on the proof of \cite[Theorem 6.1]{LioutMyas}.
		Let $G_\Delta^R \cong G_\Delta^S$. Then $\Gamma(G_\Delta^R) \cong \Gamma(G_\Delta^S)$ and, by \textit{Theorem \ref{theorem:main}}, $R^k \ltimes I_1 \cong S^k \ltimes I_2$. From \textit{Lemma \ref{lemma:IJ}}, we know that $I_1$ and $I_2$ are ideals, and from \textit{Lemma \ref{lemma:I_nilp}}, they contain all nilpotent elements of $\Gamma(G_\Delta^R)$ and $\Gamma(G_\Delta^S)$, respectively. Let $f$ be an isomorphism between $R^k \ltimes I_1$ and $S^k \ltimes I_2$. Since the image of a nilpotent element under $f$ is nilpotent, $f(I_1) = I_2$. Thus, $f$ induces an isomorphism $\overline{f} : R^k \ltimes I_1 / I_1 \rightarrow S^k \ltimes I_2 / I_2$ and, therefore, $R^k \cong S^k$.
	\end{proof}
	
	\begin{thm}\label{theorem:rigidity}
		Every finitely generated non-Abelian graph two-step nilpotent group $G_\Delta$ whose non-commutativity graph $\Delta'$ consists of exactly one connected component is rigid. In particular, if $\Delta$ is a finite disconnected graph, then the graph two-step nilpotent group $G_\Delta$ is rigid.
	\end{thm}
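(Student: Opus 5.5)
The proof will be a short reduction to Lemma \ref{lemma:rigidity_t} with $k = 1$. Let $G = G_\Delta$ be a finitely generated non-Abelian graph two-step nilpotent group whose non-commutativity graph $\Delta'$ is connected. Let $R$ and $S$ be binomial domains of characteristic $0$ with $G^R \cong G^S$. By Definition \ref{def:rigidity} we must show $R \cong S$. Since $G^R$ is the Hall $R$-completion of $G^{\mathbb{Z}}_\Delta$, we have $G^R = G_\Delta^R$, and likewise for $S$. So the hypothesis reads $G_\Delta^R \cong G_\Delta^S$.

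First I will check that the hypotheses of Theorem \ref{theorem:main} and Lemma \ref{lemma:rigidity_t} hold and that $k = 1$. Since $G$ is non-Abelian, $\Delta$ is not a clique and $|X| \ge 2$. Because $\Delta'$ is connected with at least two vertices, every vertex $x$ has a neighbour in $\Delta'$. Such a neighbour is a generator not commuting with $x$, so $x \notin Z(G_\Delta)$. By Proposition \ref{propos:baza}, $Z(G_\Delta) \cap X$ is exactly the set of vertices isolated in $\Delta'$, hence $l = 0$. Moreover, $\Delta'$ is a single component with more than one vertex, so $k = 1$.

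Lemma \ref{lemma:rigidity_t} now gives $R^1 \cong S^1$, that is, $R \cong S$. Concretely, its argument runs as follows. An isomorphism $G_\Delta^R \cong G_\Delta^S$ induces an isomorphism of centroids, since $\Gamma$ is defined purely group-theoretically. Theorem \ref{theorem:main} gives $\Gamma \cong R \ltimes I$, and the isomorphism carries $I_1$ onto $I_2$, because by Lemma \ref{lemma:I_nilp} these are exactly the sets of nilpotent elements. Passing to quotients yields $R \cong S$. This proves the main statement.

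For the ``in particular'' part, let $\Delta$ be a finite disconnected graph. Any two vertices in different components of $\Delta$ are adjacent in $\Delta'$. Any two vertices in the same component are both adjacent in $\Delta'$ to some vertex of another component. Hence $\Delta'$ is connected, as Remark \ref{remark:ZplusI} also notes. Disconnectedness forces at least two vertices that do not commute, so $G_\Delta$ is non-Abelian, and the first part applies.

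The only delicate point is the bookkeeping identification $G^R = G^R_\Delta$ together with the verification $l = 0$, $k = 1$; no real obstacle is expected. A tacit subtlety is that the centroid is an isomorphism invariant of the abstract group, which holds because $N(G)$ and $Q(G)$ are defined without reference to scalars.
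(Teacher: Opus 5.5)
Your proposal is correct and takes the same route as the paper, which derives the theorem directly from Lemma \ref{lemma:rigidity_t} with $k=1$ (equivalently, from $\Gamma(G_\Delta^R)\cong R\ltimes I$ with $I$ the set of nilpotent elements). Your explicit checks fill in what the paper leaves implicit: that connectedness of $\Delta'$ together with non-commutativity forces $l=0$ and $k=1$, and that a disconnected $\Delta$ has a connected complement and gives a non-Abelian group.
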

	
	\begin{proof}
		This theorem follows directly from \textit{Lemma \ref{lemma:rigidity_t}}. On the other hand, note that by \textit{Theorem \ref{theorem:main}} and \textit{Remark \ref{remark:ZplusI}} (and also see \textit{Example \ref{example:centroid_free}}), $\Gamma(G_\Delta^R) \cong R \ltimes I \cong \Gamma(F_r^R)$, where $F_r^R$ is an arbitrary finitely generated free two-step nilpotent Hall $R$-group of rank $r > 1$. Thus, the proof of this theorem is identical to the proof of \cite[Theorem 6.1]{LioutMyas}.
	\end{proof}
	
	The next question naturally arises.
	
	\begin{qu}\label{problem1}
		Is every finitely generated non-Abelian partially commutative nilpotent group rigid?
	\end{qu}
	We also note a more general question, known to the author from A. Miasnikov.
	
	\begin{qu}
		Is every finitely generated non-Abelian nilpotent torsion-free group rigid?
	\end{qu}
	As noted in \cite{LioutMyas}, centroids and similar structures can be a useful tool to address this question.
	
	\section{Conclusion}\label{sec13}
	
	In this paper we related the structure of the centroid of an arbitrary finitely generated graph two-step nilpotent Hall $R$-exponential group $G_\Delta^R$, where $\Delta$ is a graph without central vertices and $R$ is a binomial domain, to the structure of its commutativity graph. From this we have succeeded in obtaining a class of graphs for which the groups are rigid.
	In addition, the formulated \textit{Remark \ref{remark:common_elems}} create some scope for the analysis of the centroid for an arbitrary case (when the commutativity graph can have central vertices). 
	
	In the future, we plan to focus on solving \textit{Question \ref{problem1}}.
	
	\section*{Acknowledgment}
	\noindent The research was funded in accordance with the state task of the IM SB RAS, project FWNF-2026-0033.	
	
	The author expresses gratitude to his supervisor, Alexander V. Treier, for valuable discussions on this work. 
	The author also sincerely thanks the anonymous reviewer for carefully reading the article and providing helpful suggestions for its improvement. The author also thanks Harmonic for the excellent Aristotle tool, which allows for automatic formal verification of results.
	

\end{document}